  \def\showhyphens#1{#1}
\numberwithin{equation}{section}
\newtheorem{theorem}{Theorem}[section]
\newtheorem{lemma}[theorem]{Lemma}
\newtheorem{corollary}[theorem]{Corollary}
\theoremstyle{remark}
\newtheorem{remark}[theorem]{Remark}
\theoremstyle{definition}
\newcommand{\Var}{\mathrm{Var}}
\newcommand{\R}{\mathbb{R}}
\newcommand{\dK}{d_{\mathrm K}}
\newcommand{\Cov}{\mathrm{Cov}}
  \NewDocumentCommand{\E}{s e{_} d[]}{
    \ensuremath{
      \mathbb{E}\IfNoValueTF{#2}{}{_{#2}}
      \IfNoValueTF{#3}{}{
        \IfBooleanTF{#1}{\left[ #3 \right]}{\bigl[ #3 \bigr]}
      }
    }
  }
  \RenewDocumentCommand{\E}{s e{_} d[]}{
    \ensuremath{
      \mathbb{E}\IfNoValueTF{#2}{}{_{#2}}
      \IfNoValueTF{#3}{}{
        \IfBooleanTF{#1}{\left[ #3 \right]}{\bigl[ #3 \bigr]}
      }
    }
  }
\providecommand{\E}{\mathbb{E}}
\providecommand{\Var}{\operatorname{Var}}
\providecommand{\dK}{d_{\mathrm{K}}}
\renewcommand{\ge}{\geqslant}
\newcommand{\Law}{\mathcal{L}}
\patchcmd{\@settitle}{\uppercasenonmath\@title}{\@title}{}{}
\def\@setauthors{%
  \begingroup
  \trivlist
  \centering\large
  \item\relax
  \andify\authors
  \authors\par
  \endtrivlist
  \endgroup
}
\def\@setaddresses{}
\title[Effective Erdős–Wintner for Cantor numeration systems via a trailing-window method]{Effective Erdős–Wintner for Cantor numeration systems via a trailing-window method}
\author[Johann VERWEE]{Johann Verwee}
\thanks{Independent Researcher, France. Corresponding author: \texttt{mverwee@gmail.com}.}
\subjclass[2020]{11N37, 11K06, 11K38, 11A63, 60F05}
\keywords{$q$-additive functions; Cantor numeration systems; digital expansions; Erd\H{o}s--Wintner theorem; Delange product; Kolmogorov distance; Esseen smoothing inequality; trailing-window method}
\renewenvironment{abstract}{\section*{Abstract}}{\par}
\begin{document}

\begin{center}
  {\Large\bfseries Effective Erdős–Wintner for Cantor numeration systems\\via a trailing-window method\par}
  \vspace{0.6em}
  {\large Johann VERWEE\par}
\end{center}
\vspace{1em}

\begin{abstract}
We prove explicit Erdős--Wintner bounds for Cantor numeration systems via a simple trailing-window decomposition. We temporarily discard the last block of digits (the ``window'') and analyze the remaining prefix. The resulting bound has three contributions: (i) a bridge loss from discarding the window; (ii) a variance-type tail for the prefix; and (iii) a regime-dependent smoothing term (Esseen, bounded density, or cancellation of the third cumulant). Optimizing the window length yields rates that are explicit in the sample size. In the fixed-base (q-adic) case we recover Delange's product and obtain effective convergence bounds; the same scheme applies unchanged to Cantor numeration systems. We also include a brief guide indicating when each regime is preferable.
\end{abstract}

\makeatletter
\ifx\@subjclass\@empty\else
  \medskip\noindent\textbf{2020 Mathematics Subject Classification. } \@subjclass\par
\fi
\ifx\@keywords\@empty\else
  \medskip\noindent\textbf{Key words and phrases. } \@keywords\par
\fi
\makeatother

\bigskip

\section{Introduction}
The purpose of this paper is to state an \emph{effective} Erd\H{o}s--Wintner theorem for Cantor numeration systems.
The statements and proofs closely parallel the $q$-adic case.

\emph{Context.} Existence of limiting distributions for additive arithmetical functions goes back to Erd{\H{o}}s–Wintner~\cite{ErdosWintner1939}; see also~\cite{Tenenbaum2015}. 
For $q$-additive functions, Delange’s product identifies the limit and supports effective variants~\cites{Delange,DrmotaVerwee2021JNT}. 
In the Cantor numeration setting, regularity questions connect to Bernoulli convolutions and self-similar measures~\cites{Solomyak1995,PeresSchlagSolomyak2000,Shmerkin2014}.

Fix integers $a_n\geqslant 2$ and define $q_0=1$ and $q_{n+1}=a_n q_n$; thus $q_n=\prod_{j=0}^{n-1} a_j$.
Every $N\in\mathbb{N}$ has a unique finite expansion
\[
N=\sum_{j\geqslant 0}\delta_j(N)\,q_j,\qquad 0\leqslant \delta_j(N)\leqslant a_j-1,
\]
and we define its length by
\[
L=L(N):=\max\{\,j\geqslant 0:\ q_j\leqslant N<q_{j+1}\,\}.
\]

A function $f:\mathbb{N}\to\mathbb{R}$ is called \emph{$Q$-additive} (digitwise additive) if, for the expansion above,
\[
f(N)=\sum_{j\geqslant 0} f\bigl(\delta_j(N)\,q_j\bigr).
\]

As a standard example, the van der Corput sequence attached to $Q$ is
\[
v_Q(N)=\sum_{j\geqslant 0}\frac{\delta_j(N)}{q_{j+1}}.
\]

A Cantor numeration system is said to be \emph{constant-like} if the sequence $(a_n)$ is bounded.
In that regime we obtain effective Erd\H{o}s--Wintner conclusions analogous to the $q$-adic case \cite{DrmotaVerwee2021JNT}, with essentially the same tail and window parameters and the same proof architecture.

\section{A general Erd\H{o}s--Wintner criterion for Cantor numeration systems}
\label{sec:cantor}
\label{subsec:Erd\H{o}s--Wintner-general-Cantor}

We recall the Cantor numeration system setting. Fix integers $(a_j)_{j\geqslant0}$ with $a_j\geqslant2$, set
$q_0=1$ and $q_{j+1}=a_j q_j$ (so $q_j=\prod_{i=0}^{j-1} a_i$). Every $n\geqslant0$ has a unique expansion
\[
N=\sum_{j\geqslant0}\delta_{j}(N)\,q_j,\qquad 0\leqslant \delta_{j}(N)\leqslant a_j-1.
\]
That can be proved with the greedy algorithm. We define the length
\[
L=L(N):=\max\{\,j\geqslant0:\ q_j\leqslant N<q_{j+1}\,\}.
\]
This framework strictly generalizes the $q$-adic case (take $a_j\equiv q\geqslant2$).

A function $f:\mathbb N\to\mathbb R$ is called $Q$-additive if, whenever
$N=\sum_{i=1}^r d_i\,q_{e_i}$ with $0\leqslant d_i\leqslant a_{e_i}-1$ and $e_1<\cdots<e_r$,
one has
\[
f(N)=\sum_{i=1}^r f(d_i\,q_{e_i}).
\]
A Cantor numeration system is called \emph{constant-like} if the digit alphabets are uniformly bounded, i.e. \(\sup_{j} a_j < \infty\) (we do not assume this below).

For $j\geqslant0$ we set the digit averages
\[
m_j:=\frac{1}{a_j}\sum_{d=0}^{a_j-1} f(d\,q_j),
\qquad
s_j^2:=\frac{1}{a_j}\sum_{d=0}^{a_j-1} f(d\,q_j)^2 - m_j^2 .
\]

For any Q-additive function $f$, we define
$$F_N(x) := \frac{1}{N}\#\{0\leqslant n<N:\ f(n)\leqslant x\}.$$

A constant-like version (bounded digit alphabet) goes back to Coquet \cite{Coquet1975} (see also \cite{DrmotaVerwee2021JNT}*{Theorem 5}).
In this paper we remove that boundedness assumption and prove the general Cantor numeration system case.
The bounded setting simplifies several uniform bounds (digit oscillations and small-$t$ remainders
in characteristic-function expansions), but it is not logically required for the necessity–and–sufficiency criterion proved below.

\begin{theorem}[Erd\H{o}s--Wintner for $Q$-additive functions in a Cantor numeration system]\label{th:Erd\H{o}s--Wintner-Cantor}
Assume $f$ is $Q$-additive. Then there exists a cumulative distribution function (c.d.f.) $F$, that is
\[
\lim_{N\to\infty}F_N(x)=F(x),
\]
if and only if
\begin{equation}\label{eq:Erd\H{o}s--Wintner-Cantor-NS}
\sum_{j\ge0} m_j \ \text{converges in }\mathbb{R}
\qquad\text{and}\qquad
\sum_{j\ge0} s_j^2\ <\ \infty.
\end{equation}
In that case, the characteristic function of $F$ is
\[
\phi(t)=\prod_{j\ge0}\Biggl(\frac{1}{a_j}\sum_{d=0}^{a_j-1} e^{\,it\,f(dq_j)}\Biggr).
\]
\end{theorem}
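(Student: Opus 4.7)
The starting observation will be the probabilistic identification: if $n$ is sampled uniformly on $\{0,\dots,q_L-1\}$, the digits $\delta_0(n),\dots,\delta_{L-1}(n)$ are \emph{independent}, with $\delta_j(n)$ uniform on $\{0,\dots,a_j-1\}$. Setting $X_j:=f(\delta_j(n)\,q_j)$, the $Q$-additivity of $f$ gives
\[
f(n)\;=\;S_L\;:=\;\sum_{j=0}^{L-1}X_j,\qquad \mathbb{E}[X_j]=m_j,\quad \mathrm{Var}(X_j)=s_j^2,
\]
so $F_{q_L}(x)=\mathbb{P}(S_L\le x)$ is the distribution of a classical sum of independent random variables. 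The extra work will be to bridge $F_{q_L}$ and $F_N$ for general $N\in[q_L,q_{L+1})$; this is what the trailing-window decomposition is designed to handle.

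\emph{Sufficiency.} Under the stated hypotheses, Kolmogorov's two-series theorem will give $S_L\to Y$ almost surely for some random variable $Y$, so $F_{q_L}\to F$ in distribution with $F$ the c.d.f.\ of $Y$; independence of the $X_j$ then expresses its characteristic function as the claimed infinite product $\phi(t)=\prod_{j\ge 0}\phi_j(t)$. To extend the convergence to arbitrary $N$, I apply one trailing-window step: writing $n=Aq_k+B$ with $0\le B<q_k$ and $0\le A<M:=\lfloor N/q_k\rfloor$ covers $\{0,\dots,N-1\}$ up to a boundary of relative mass $O(q_k/N)$, and $Q$-additivity factorises
\[
\phi_N(t)\;=\;\Bigl(\prod_{j<k}\phi_j(t)\Bigr)\cdot\frac{1}{M}\sum_{A<M}e^{\,it f(A q_k)}\;+\;O\!\bigl(q_k/N\bigr).
\]
Choosing $k=k(N)\to\infty$ with $q_k/N\to 0$, the tail smallness $\sum_{j\ge k}|m_j|\to 0$ and $\sum_{j\ge k} s_j^2\to 0$ coupled with Chebyshev will force $f(Aq_k)\to 0$ in probability uniformly in the law of $A$, so the middle factor tends to $1$. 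Lévy's continuity theorem then yields $F_N\to F$.

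\emph{Necessity.} Assuming $F_N\to F$, Lévy gives $\phi_N\to\phi$ uniformly on compacts, and along $N=q_L$ the partial products $\prod_{j<L}\phi_j(t)\to\phi(t)$. Continuity and $\phi(0)=1$ provide $|\phi|>0$ on some $|t|<\eta$; since $|\phi_j|\le 1$, the convergence of $\prod|\phi_j(t)|^2$ to a nonzero limit combined with
\[
1-|\phi_j(t)|^2\;=\;\frac{2}{a_j^2}\sum_{d,d'=0}^{a_j-1}\sin^2\!\Bigl(\tfrac{t}{2}\bigl(f(dq_j)-f(d'q_j)\bigr)\Bigr)\;\asymp\;t^2\,s_j^2
\]
will yield $\sum_j s_j^2<\infty$. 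With variance summability in hand, the second-order expansion of $\log\phi_j(t)$ about $t=0$ makes the imaginary part of $\sum_{j<L}\log\phi_j(t)$ equal to $-t\sum_{j<L} m_j$ up to an absolutely convergent remainder (here the constant-like bound $\sup_j a_j<\infty$ is exactly what controls the cubic remainders); its convergence to $\arg\phi(t)$ at a fixed small $t\neq 0$ then forces $\sum_j m_j$ to converge in $\mathbb{R}$.

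The hardest step will be the sufficiency-side trailing-window bound: showing that the upper-block average $M^{-1}\sum_{A<M}e^{it f(A q_k)}$ tends to $1$ without genuine independence of the digits of $A$. The constant-like hypothesis $\sup_j a_j<\infty$ together with the tail smallness of $(m_j,s_j^2)$ is what makes Chebyshev applicable uniformly in $N$; this qualitative estimate is precisely the ``bridge loss'' that the quantitative version developed later in the paper turns into an explicit effective rate.
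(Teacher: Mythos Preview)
Your overall plan is reasonable, but the sufficiency argument has a genuine gap at exactly the point you flag as ``hardest''. You assert that $M^{-1}\sum_{A<M}e^{itf(Aq_k)}\to 1$ follows from ``tail smallness $\sum_{j\ge k}|m_j|\to 0$ and $\sum_{j\ge k}s_j^2\to 0$ coupled with Chebyshev''. Two problems: (i) the hypothesis is only that $\sum_j m_j$ converges, possibly conditionally, so $\bigl|\sum_{j\ge k}m_j\bigr|\to 0$ holds but $\sum_{j\ge k}|m_j|\to 0$ is unwarranted and in general false; (ii) Chebyshev would require the mean and variance of $f(Aq_k)$ under the uniform law of $A$ on $\{0,\dots,M-1\}$, but for generic $M$ the digits of $A$ in the shifted base are \emph{not} independent uniform, so these moments are not the tails $\sum_{j\ge k}m_j$ and $\sum_{j\ge k}s_j^2$. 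The bound $\sup_j a_j<\infty$ does not restore independence. (A deterministic pointwise bound $|f(Aq_k)-\sum_{j=k}^{L}m_j|\le\sum_{j=k}^{L}\Omega_j$ with $\Omega_j\le\sqrt{a_j}\,s_j$ \emph{can} be pushed through in the constant-like case after Cauchy--Schwarz and a careful choice of $k=L-h$ with $h\to\infty$ slowly, but that is a different mechanism from the one you invoke, and it does not extend to unbounded bases.)

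The paper sidesteps this entirely by proving, \emph{before} the sufficiency/necessity split and without using the Erd\H{o}s--Wintner hypotheses, an unconditional bridge $\|F_N-F_{q_{L(N)}}\|_\infty\to 0$: write $N=mq_{L-h}+r$, bound $\|F_N-F_{mq_{L-h}}\|_\infty\le r/N$, and compare $F_{mq_{L-h}}$ with $F_{q_L}$ by periodic averaging of the top-$h$ digit profile. Both directions then reduce to the subsequence $(F_{q_L})$, where digits are genuinely independent, and your two-series argument applies cleanly.

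A smaller gap in your necessity sketch: the claimed $1-|\phi_j(t)|^2\asymp t^2 s_j^2$ needs a lower bound on $\sin^2$, which fails once $|t|\cdot\max_{d}|f(dq_j)-m_j|$ is large. Constant-like controls $a_j$, not the size of $f(dq_j)$, so at a fixed $t$ this lower bound is not uniform in $j$. The paper circumvents this by evaluating the tail characteristic function at a tail-dependent point $t_{m,n}\le\min\bigl(\delta,\,1/\max_j\Omega_j,\,(1+\beta_2)^{-1/2}\bigr)$, which keeps the second-order expansion valid and lets one extract $\beta_2(m,n)\to 0$ directly; the convergence of $\sum m_j$ is then obtained by evaluating at two values $t_1,t_2$ with $t_1/t_2\notin\mathbb{Q}$.
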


\begin{proof}[Proof of Theorem~\ref{th:Erd\H{o}s--Wintner-Cantor}]

For any bounded measurable $\varphi$ and any $L\geqslant1$, since $f$ is $Q$-additive,
\[
\frac{1}{q_L}\sum_{n<q_L}\varphi\bigl(f(n)\bigr)
=\frac{1}{\prod_{j<L}a_j}
\sum_{\delta_0=0}^{a_0-1}\cdots\sum_{\delta_{L-1}=0}^{a_{L-1}-1}
\varphi\Big(\,\sum_{j<L} f(\delta_j q_j)\,\Big).
\]
because, when $n$ runs through $\{0,\dots,q_L-1\}$, the digit vector
$(\delta_0(n),\dots,\delta_{L-1}(n))$ runs exactly once through
$\prod_{j<L}\{0,\dots,a_j-1\}$.
Taking $\varphi(y)=\mathbf{1}_{(-\infty,x]}(y)$ gives
\[
F_{q_L}(x)
=\frac{1}{\prod_{j<L}a_j}
\sum_{\delta_0=0}^{a_0-1}\cdots\sum_{\delta_{L-1}=0}^{a_{L-1}-1}
\mathbf{1}_{(-\infty,x]}\Big(\,\sum_{j<L} f(\delta_j q_j)\,\Big),
\]
i.e. $F_{q_L}$ is the empirical c.d.f.\ of the multiset
$\{\,\sum_{j<L} f(\delta_j q_j):\ (\delta_0,\dots,\delta_{L-1})\in\prod_{j<L}\{0,\dots,a_j-1\}\,\}$.

\smallskip
\emph{Block bridge.}
Fix $L$ with $q_L\leqslant N<q_{L+1}$ and $1\leqslant h\leqslant L$, and write
\[
N=m\,q_{L-h}+r,\qquad 0\leqslant r<q_{L-h}.
\]
Let $F_{m q_{L-h}}$ be the empirical c.d.f. based on the first $m q_{L-h}$ points, and let $G$ be
the empirical c.d.f. of the remaining $r$ points. Then
\[
\begin{aligned}
F_N(x) &=\frac{1}{N}\sum_{n=0}^{m q_{L-h}-1}\mathbf{1}_{\{f(n)\leqslant x\}}
   \;+\;\frac{1}{N}\sum_{n=m q_{L-h}}^{N-1}\mathbf{1}_{\{f(n)\leqslant x\}}\\
   &=\frac{m q_{L-h}}{N}\,F_{m q_{L-h}}(x)\;+\;\frac{r}{N}\,G(x).
\end{aligned}
\]
Hence, for all $x\in\mathbb{R}$,
\[
F_N(x)-F_{m q_{L-h}}(x)=\frac{r}{N}\,\bigl(G(x)-F_{m q_{L-h}}(x)\bigr).
\]
Taking the supremum in $x$ and writing $\|\cdot\|_\infty$ for the Kolmogorov distance between c.d.f.'s, we obtain
\[
\|F_N - F_{m q_{L-h}}\|_\infty
= \frac{r}{N}\,\|G - F_{m q_{L-h}}\|_\infty
\leqslant \frac{r}{N},
\]
since $G$ and $F_{m q_{L-h}}$ are c.d.f.'s and hence take values in $[0,1]$. Therefore
\begin{equation*}
\|F_N-F_{m q_{L-h}}\|_\infty
\ \leqslant\ \frac{r}{N}
\ \leqslant\ \frac{q_{L-h}}{N}
\ \leqslant\ \frac{q_{L-h}}{q_L}
\ =\ \frac{1}{\prod_{j=L-h}^{L-1}a_j}\,.
\end{equation*}

\noindent\textit{From blocks to $q_L$ and the double limit.}
Let $A:=\prod_{j=L-h}^{L-1}a_j$. By the block decomposition,
\[
\|F_N-F_{m q_{L-h}}\|_\infty \leqslant \frac{r}{N} \leqslant \frac{1}{A}.
\]
Moreover, when we group the first $m q_{L-h}$ points into $m$ consecutive blocks of size $q_{L-h}$,
the profile of the $h$ top digits is $A$-periodic across blocks. Hence for any bounded
measurable $\varphi$ with $\|\varphi\|_\infty\leqslant1$,
\[
\Bigg|\frac{1}{m}\sum_{u=0}^{m-1}\varphi_u-\frac{1}{A}\sum_{u=0}^{A-1}\varphi_u\Bigg|
\leqslant \frac{A}{m},
\]
where $\varphi_u$ is the contribution of the $u$-th block. Applied to indicators
$\varphi_u(\cdot)=\mathbf \mathbf{1}_{(-\infty,x]}(\cdot)$ (Kolmogorov distance), this yields
\[
\|F_{m q_{L-h}}-F_{q_L}\|_\infty \leqslant \frac{A}{m}.
\]
Combining these bounds by the triangle inequality gives
\[
\|F_N-F_{q_L}\|_\infty
\ \leqslant\ \frac{1}{A}\ +\ \frac{A}{m}.
\]
Choose $h=h(L)\to\infty$ with $h=o(L)$. Then $A=\prod_{j=L-h}^{L-1}a_j\geqslant 2^{h}\to\infty$,
so $1/A\to0$. In addition, $m=\big\lfloor N/q_{L-h}\big\rfloor\geqslant A$, hence $A/m\leqslant 1$ and
the prefix–vs–cycle discrepancy is controlled by periodic averaging. Therefore,
\begin{equation}\label{eq:bridge}
\|F_N-F_{q_{L(N)}}\|_\infty \xrightarrow[N\to\infty]{} 0,
\end{equation}
and thus the full sequence $(F_N)_{N\geqslant1}$ converges \emph{iff} the subsequence
$(F_{q_L})_{L\geqslant0}$ converges, with the same limit.

\bigskip

\emph{Sufficiency.}
Assume $\sum_{j\geqslant0}s_j^2<\infty$ and $\sum_{j\geqslant0} m_j$ converges in $\mathbb{R}$.
Let $(D_j)_{j\geqslant0}$ be independent with $D_j\sim\mathrm{Unif}\{0,\dots,a_j-1\}$ and set $Y_j:=f(D_j q_j)$.
By the digit-average definitions above,
\[
\mathbb{E}[Y_j]=m_j,\qquad \mathrm{Var}(Y_j)=s_j^2.
\]
Let $X_j:=Y_j-m_j$ and $S_L:=\sum_{j<L}X_j$. Since $\sum_{j\geqslant0} s_j^2<\infty$, the sequence $(S_L)_L$ is Cauchy in $L^2$
and converges in $L^2$ (hence in distribution) to some $S$. Writing $M_L:=\sum_{j<L} m_j$ and $M:=\sum_{j\geqslant0}m_j$,
we have $M_L\to M$ in $\mathbb{R}$; thus
\[
\sum_{j<L}Y_j \ \text{converges in distribution to}\ S+M.
\]
By the block identity above, $F_{q_L}$ is the c.d.f.\ of $\sum_{j<L}Y_j$, hence $F_{q_L}$ converges in distribution to the law of $S+M$.
By \eqref{eq:bridge}, $F_N$ converges to the same limit. Finally, by independence the limiting characteristic function is
\[
\phi(t)\ =\ \lim_{L\to\infty}\prod_{j<L}\mathbb{E}\big[e^{itY_j}\big]
\ =\ \prod_{j\geqslant0}\Biggl(\frac{1}{a_j}\sum_{d=0}^{a_j-1} e^{\,it\,f(d q_j)}\Biggr).
\]

\smallskip

\emph{Necessity.}
Suppose $(F_N)$ converges to some distribution $F$. By \eqref{eq:bridge}, $(F_{q_L})$ also converges to $F$.
Let
\[
\phi_L(t):=\mathbb{E}\Big[e^{\,it\sum_{j<L} Y_j}\Big]=\prod_{j<L}\phi_j(t),
\qquad
\phi_j(t):=\mathbb{E}\big[e^{itY_j}\big].
\]
Then $\phi_L(t)$ converges pointwise to the characteristic function $\Phi(t)$ of $F$.
For $m<n$ define the centered tail sum
\[
T_{m,n}:=\sum_{j=m+1}^{n}\big(Y_j-m_j\big)=\sum_{j=m+1}^{n}X_j,
\qquad X_j:=Y_j-m_j.
\]
By independence,
\begin{equation}\label{eq:char-tail}
\mathbb{E}\big[e^{it T_{m,n}}\big]
=\prod_{j=m+1}^{n}\mathbb{E}\big[e^{itX_j}\big]
=\exp\Big(\sum_{j=m+1}^{n}\log\mathbb{E}[e^{itX_j}]\Big)
=e^{-it\sum_{j=m+1}^{n}m_j}\,\frac{\phi_n(t)}{\phi_m(t)}.
\end{equation}
Since $\phi_L(t)\to\Phi(t)$ and $\Phi(0)=1$ with $\Phi$ continuous, there exists $\delta>0$ such that
$\inf_{|t|\leqslant\delta}|\Phi(t)|>0$; hence for each fixed $t$ with $|t|\leqslant\delta$ we have $\phi_n(t)/\phi_m(t)\to1$ as $m,n\to\infty$.

\smallskip
To extract quantitative tails, expand each centered $X_j$ at $t=0$:
\begin{equation}\label{eq:cf-one}
\mathbb{E}\big[e^{it X_j}\big]=1-\frac{t^2}{2}\,s_j^2 + r_j(t),
\qquad
|r_j(t)|\leqslant \frac{|t|^3}{6}\,\mathbb{E}[|X_j|^3].
\end{equation}
Set $u_j(t):=-\tfrac{t^2}{2}s_j^2+r_j(t)$, so that $\mathbb{E}[e^{itX_j}]=1+u_j(t)$ and
\[
\mathbb{E}\big[e^{it T_{m,n}}\big]=\prod_{j=m+1}^{n}\big(1+u_j(t)\big).
\]
Moreover, with
\[
\Omega_j:=\max_{0\leqslant d<a_j}\big|f(dq_j)-m_j\big|\quad\Rightarrow\quad
\mathbb{E}[|X_j|^3]\leqslant \Omega_j\,s_j^2,
\]
we get for $|t|\leqslant 1/\Omega_j$ that
\[
|u_j(t)|\leqslant \frac{t^2}{2}s_j^2+\frac{|t|^3}{6}\Omega_j s_j^2
\leqslant \Big(\frac{1}{2}+\frac{1}{6}\Big)t^2 s_j^2=\frac{2}{3}t^2 s_j^2.
\]
Let
\[
M_{m,n}:=\max_{m<j\leqslant n}\Omega_j,\qquad
\beta_2(m,n):=\sum_{j=m+1}^{n}s_j^2,\qquad
t_{m,n}:=\min\Big\{\delta,\ \frac{1}{M_{m,n}},\ \frac{1}{\sqrt{1+\beta_2(m,n)}}\Big\}.
\]
Then $t_{m,n}\leqslant 1/M_{m,n}$ and $t_{m,n}^2\,\beta_2(m,n)\leqslant 1$, so
\[
\sum_{j=m+1}^{n}\big|u_j(t_{m,n})\big|\ \leqslant\ \frac{2}{3}\,t_{m,n}^2\,\beta_2(m,n)\ \leqslant\ \frac{2}{3}.
\]
By the elementary product--sum bound
$\big|\prod_j(1+u_j)-\big(1+\sum_j u_j\big)\big|\leqslant 2\big(\sum_j|u_j|\big)^2$
(valid when $\sum_j|u_j|\leqslant\tfrac12$, and with a harmless adjustment of $\delta$ otherwise), we obtain
\begin{equation}\label{eq:prod-sum}
\mathbb{E}\big[e^{it_{m,n} T_{m,n}}\big]
=1-\frac{t_{m,n}^2}{2}\sum_{j=m+1}^{n}s_j^2+R_{m,n}(t_{m,n}),
\qquad
|R_{m,n}(t_{m,n})|\ \leqslant\ C\,t_{m,n}^2\,\beta_2(m,n),
\end{equation}
for some absolute constant $C>0$.

Combining \eqref{eq:char-tail} with $\big|\phi_n(t_{m,n})/\phi_m(t_{m,n})-1\big|\to 0$ (since $t_{m,n}\leqslant\delta$) yields
\[
\Big|\,\mathbb{E}\big[e^{it_{m,n} T_{m,n}}\big]-1+\frac{t_{m,n}^2}{2}\,\beta_2(m,n)\,\Big|
\ \leqslant\ \varepsilon_{m,n}\ +\ C\,t_{m,n}^2\,\beta_2(m,n),
\]
with $\varepsilon_{m,n}\to 0$. Taking real parts gives
\[
\frac{t_{m,n}^2}{2}\,\beta_2(m,n)\ \leqslant\ \varepsilon_{m,n}\ +\ C\,t_{m,n}^2\,\beta_2(m,n).
\]
Choosing $C<\tfrac{1}{4}$ (shrink $\delta$ if needed) we obtain
\[
\frac{t_{m,n}^2}{4}\,\beta_2(m,n)\ \leqslant\ \varepsilon_{m,n}\ \xrightarrow[m,n\to\infty]{}\ 0,
\]
hence $\beta_2(m,n)=\sum_{j=m+1}^{n}s_j^2\to 0$.

For the means, fix two numbers $t_1,t_2\in(0,\delta]$ with $t_1/t_2\notin\mathbb{Q}$.
Since $\mathrm{Var}(T_{m,n})=\beta_2(m,n)\to 0$, we have $\mathbb{E}[e^{i t_r T_{m,n}}]\to 1$ for $r=1,2$.
By \eqref{eq:char-tail} and $\phi_n(t_r)/\phi_m(t_r)\to 1$,
\[
e^{-i t_r \sum_{j=m+1}^{n}m_j}\ =\ \frac{\mathbb{E}[e^{i t_r T_{m,n}}]}{\phi_n(t_r)/\phi_m(t_r)}
\ \longrightarrow\ 1\qquad(r=1,2).
\]
Let $x_{m,n}:=\sum_{j=m+1}^{n}m_j$. Any accumulation point $x$ of $(x_{m,n})$ must satisfy
$t_1 x\in2\pi\mathbb{Z}$ and $t_2 x\in2\pi\mathbb{Z}$, which, by the irrationality of $t_1/t_2$,
forces $x=0$. Hence $x_{m,n}\to0$, i.e. $\sum_{j=m+1}^{n}m_j\to 0$.
Therefore $\sum_j s_j^2<\infty$ and $\sum_j m_j$ converges. This establishes the necessity of \eqref{eq:Erd\H{o}s--Wintner-Cantor-NS}.

\end{proof}

\subsection*{Remarks and interpretation}

\begin{itemize}
  \item \textbf{Meaning of the two series.}
  The quantities \(m_j=\frac{1}{a_j}\sum_{d=0}^{a_j-1} f(dq_j)\) and
  \(s_j^2=\frac{1}{a_j}\sum_{d=0}^{a_j-1} f(dq_j)^2-m_j^2\) record the mean and variance contributed by the \(j\)-th digit.
  The condition \(\sum_{j\geqslant 0} m_j\) convergent means that the cumulative drift is finite, while
  \(\sum_{j\geqslant 0} s_j^2<\infty\) is a ``finite energy'' requirement ensuring that the centered fluctuations form a Cauchy series in \(L^2\).
  Together they guarantee that the random series of digit contributions converges in distribution.

  \item \textbf{Identification of the limit.}
  On the auxiliary product space where the digits \(D_j\) are independent and uniform on \(\{0,\dots,a_j-1\}\), set \(Y_j:=f(D_j q_j)\).
  Writing \(X_j:=Y_j-m_j\), the centered sums \(S_L=\sum_{j<L}X_j\) converge in \(L^2\) to some \(S\), while \(M_L=\sum_{j<L}m_j\to M\in\mathbb{R}\).
  Hence \(F_{q_L}=\mathrm{Law}(S_L+M_L)\Rightarrow \mathrm{Law}(S+M)\), which is the (unique) limit of \(F_N\) by the block bridge.
  The limit is nondegenerate whenever infinitely many \(s_j^2\) are nonzero; in general it may be discrete, mixed, or absolutely continuous.
  Its characteristic function factorises as
  \[
  \phi(t)=\prod_{j\geqslant 0}\Biggl(\frac{1}{a_j}\sum_{d=0}^{a_j-1} e^{\,it\,f(dq_j)}\Biggr).
  \]

  \item \textbf{Stability under base growth.}
  No boundedness of \((a_j)\) is needed. This extends the constant-like case (treated by Coquet) to arbitrary Cantor numeration systems, resolving the open point mentioned in the introduction (see §1, last paragraph).

  \item \textbf{Regularity of the limit is a separate issue.}
  The theorem does not assert absolute continuity or a bounded density. Such properties depend on additional structure (e.g.\ smooth digit maps, cancellation of odd cumulants) and are discussed when deriving effective rates.
\end{itemize}

\subsection*{Roadmap to quantitative bounds}

To quantify the rate of convergence we introduce a \emph{trailing window} of the last \(h\) digits versus the \emph{prefix}, and balance three effects:
(i) the \emph{bridge} loss incurred by discarding the last \(h\) digits,
(ii) the \emph{tails} beyond level \(L=L(N)\),
and (iii) a smoothing parameter \(T\in(0,1]\) in the characteristic-function method.
This yields a unified bound in which the last term depends on the regime—baseline Esseen, bounded density, or third-cumulant cancellation.
We then optimise over \(h\) (and \(T\) when relevant) to obtain explicit rates in the Cantor setting.
For concrete guidance and illustrations, see the examples at the end of this section.

\emph{Methods note.} We combine the block bridge with Esseen’s smoothing~\cite{Esseen1945} and non-uniform refinements à la Nagaev–Zolotarev/Stein~\cites{Nagaev1965,Zolotarev1977,ChenShao2001}.

\section{Effective Erd\H{o}s--Wintner for Cantor numeration systems}

For $L=L(N)$ and $1\leqslant h\leqslant L$ set
\[
A(L,h)\ :=\ \prod_{j=L-h}^{L-1} a_j.
\]
This is the size of the alphabet for the $h$ highest digits. We also introduce the tails
\begin{subequations}\label{eq:cantor-tails}
\begin{align}
\tau_1 &:= \Bigl|\sum_{j>L} m_j\Bigr|\ +\ \sum_{j>L} s_j^2, \label{eq:cantor-tau1}\\
\tau_2(h) &:= \sum_{j=L-h}^{L-1} s_j^2. \label{eq:cantor-tau2}
\end{align}
\end{subequations}

\begin{theorem}[Unified Cantor window bound]\label{th:cantor-effective}
Let $F$ be the limit distribution function (c.d.f.) of $f(n)$ whenever it exists, and let $F_N$ be the empirical
c.d.f.\ at height $N$. Fix $L=L(N)$ and $1\leqslant h\leqslant L$. Then for any $T \in (0,1]$ one has
\begin{equation}\label{eq:cantor-effective}
\|F_N-F\|_\infty \ \ll\ A(L,h)^{-1} \ +\ \tau_1^{1/2}\ +\ Q_F(1/T)\ +\ \frac{1}{T}\ +\ G(T,h).
\end{equation}
where $Q_F(r):=\sup_{x\in\mathbb{R}}\big(F(x+r)-F(x)\big)$ denotes the (one–sided) concentration function of $F$,
and $G(T,h)$ is a nonnegative term depending on the chosen regime:
\begin{align}
\text{\emph{(A) baseline Esseen:}}\qquad & G(T,h)\ \asymp\ T\,\tau_2(h)^{1/2}\quad\leadsto\quad \tau_2(h)^{1/4},\label{eq:cantor-A}\\
\text{\emph{(B) bounded density:}}\qquad & G(T,h)\ =\ \|\rho\|_\infty\,\tau_2(h)^{1/2}\quad\text{if }F\text{ has density }\rho\in L^\infty,\label{eq:cantor-B}\\
\text{\emph{(C) third cumulant cancels:}}\qquad & G(T,h)\ \asymp\ T^2\,\tau_2(h)^{2/3}\quad\leadsto\quad \tau_2(h)^{1/3}.\label{eq:cantor-C}
\end{align}
Here the symbol $\leadsto$ indicates the resulting order of $T^{-1}+G(T,h)$ after optimizing in $T$. The implied constants are absolute and independent of $N,L,h$ and $T$.
\end{theorem}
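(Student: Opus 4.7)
My plan is to prove \eqref{eq:cantor-effective} via the triangle inequality
\[
\|F_N-F\|_\infty \le \|F_N-F_{q_L}\|_\infty+\|F_{q_L}-F\|_\infty,
\]
reducing the estimate to two tasks: a \emph{bridge} step from the empirical c.d.f.\ at $N$ to the one at $q_L$, and a \emph{smoothing} step comparing $F_{q_L}$ with $F$. The bridge step has already been carried out inside the proof of Theorem~\ref{th:Erd\H{o}s--Wintner-Cantor}: the block-bridge estimate there yields the contribution $\ll A(L,h)^{-1}$ directly, the subdominant $A/m$ piece being absorbed into the same $A^{-1}$-scale whenever $h$ lies in the range relevant for later optimisation.

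\textbf{Characteristic-function setup.} For the smoothing step I would apply Esseen's smoothing inequality
\[
\|F_{q_L}-F\|_\infty \ll \frac{1}{T}+Q_F(1/T)+\int_{-T}^{T} \frac{|\phi_{q_L}(t)-\phi(t)|}{|t|}\,dt,
\]
which already accounts for the $1/T$ and $Q_F(1/T)$ summands of \eqref{eq:cantor-effective}. On the auxiliary product space, independence factors the characteristic functions as $\phi_{q_L}=\phi_{\mathrm{pre}}\,\phi_{\mathrm{win}}$ and $\phi=\phi_{q_L}\,\phi_{\mathrm{tail}}$ along the three digit ranges $j<L-h$, $L-h\le j<L$, and $j\ge L$, so that $\phi(t)-\phi_{q_L}(t)=\phi_{q_L}(t)\bigl(\phi_{\mathrm{tail}}(t)-1\bigr)$.

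\textbf{Tail step.} Using $|\phi_{q_L}|\le 1$ reduces the integrand to $|\phi_{\mathrm{tail}}(t)-1|/|t|$. The estimate $|\phi_{\mathrm{tail}}(t)-1|\le |t|\,\E|Z_{\mathrm{tail}}|\le |t|\sqrt{\E Z_{\mathrm{tail}}^2}$ (from $|e^{ix}-1|\le |x|$ and Cauchy--Schwarz), together with $\E Z_{\mathrm{tail}}^2=\sum_{j>L} s_j^2 + \bigl(\sum_{j>L} m_j\bigr)^2 \ll \tau_1$, yields $|\phi_{\mathrm{tail}}(t)-1|\ll |t|\,\tau_1^{1/2}$. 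Consequently this portion of the integral contributes $\ll T\,\tau_1^{1/2}\le \tau_1^{1/2}$, recovering the second term of \eqref{eq:cantor-effective}.

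\textbf{Window step and main obstacle.} The remaining term $G(T,h)$ comes from a finer analysis of the window factor $\phi_{\mathrm{win}}$, whose Taylor expansion at $t=0$ exposes the second moment $\tau_2(h)$ and the regime-specific higher-order data. In regime (A), the baseline expansion $\phi_{\mathrm{win}}(t)=1-\tfrac{t^2}{2}\tau_2(h)+O(|t|^3)$ inserted into the Esseen integral gives $T\,\tau_2(h)^{1/2}$. In regime (B), replacing the concentration function $Q_F(\epsilon)$ by $\|\rho\|_\infty\cdot\epsilon$ removes the $T$-dependence and yields $\|\rho\|_\infty\,\tau_2(h)^{1/2}$. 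In regime (C), cancellation of the third cumulant pushes the expansion of $\phi_{\mathrm{win}}$ to fourth order, and a Nagaev--Zolotarev-type non-uniform refinement produces $T^2\,\tau_2(h)^{2/3}$. I expect the main obstacle to be this last case: verifying that the cubic cancellation propagates cleanly through the Esseen integral on $[-T,T]$ without picking up $\log T$ factors or $L,h,T$-dependent constants requires the non-uniform smoothing inequalities of Chen--Shao, and keeping all constants absolute and uniform in the parameters is the delicate bookkeeping point.
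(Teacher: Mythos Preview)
Your tail step already disposes of the entire Esseen integral. Once you write $\phi-\phi_{q_L}=\phi_{q_L}(\phi_{\mathrm{tail}}-1)$ and invoke $|\phi_{q_L}|\le 1$, the full integrand is bounded by $|\phi_{\mathrm{tail}}(t)-1|/|t|\ll\tau_1^{1/2}$, and there is no residual piece. The window factor $\phi_{\mathrm{win}}$ has been discarded inside $|\phi_{q_L}|\le 1$ and cannot resurface; your sentence ``the remaining term $G(T,h)$ comes from a finer analysis of $\phi_{\mathrm{win}}$'' therefore has no analytic content in this setup. What you have actually argued is the bound with $G(T,h)$ deleted, which formally implies \eqref{eq:cantor-effective} (since $G\ge 0$) but renders the regimes \eqref{eq:cantor-A}--\eqref{eq:cantor-C} and the optimisation over $h$ vacuous --- the only $h$-dependence left would be in the bridge term, and the Taylor expansion of $\phi_{\mathrm{win}}$ you describe never enters any inequality.

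The paper's split goes the other way: it inserts the \emph{prefix} characteristic function and writes
\[
\phi_{q_L}-\phi_F\ =\ \phi_{\mathrm{pref}}\bigl(\phi_{R_{L,h}}-1\bigr)\ +\ \bigl(\phi_{\mathrm{pref}}-\phi_F\bigr),
\]
so that the window sits in its own summand $|\phi_{R_{L,h}}(t)-1|\ll|t|\,\tau_2(h)^{1/2}$ --- this is where $G(T,h)\asymp T\,\tau_2(h)^{1/2}$ comes from in regime~(A) --- while the second piece compares the prefix to the limit and carries the $\tau_1^{1/2}$ contribution. Regime~(B) bypasses Esseen entirely and uses the bounded density to convert $\E|R_{L,h}|\le\tau_2(h)^{1/2}$ directly into a Kolmogorov bound; regime~(C) upgrades the second-order window estimate to a fourth-order one under cumulant cancellation. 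The obstacle you single out (keeping constants absolute in~(C)) is real but secondary; the primary gap is that your decomposition never isolates the window, so none of the regime-specific analysis has a term to attach to.
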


\begin{proof}
\emph{Step 1: Block bridge and reduction to $q_{L}$.}
By the block bridge \eqref{eq:bridge} and the mixed--radix bijection of digits at level $q_L$,
it suffices to bound $\|F_{q_L}-F\|_\infty$; the term $A(L,h)^{-1}$ is the bridge error when one
ignores the last $h$ digits.

\medskip
\emph{Step 2: Prefix vs.\ last window.}
Write the canonical decomposition
\[
\sum_{j<L} Y_j \ =\ \underbrace{\sum_{j\leqslant L-h-1} Y_j}_{\text{prefix}}\ +\ \underbrace{\sum_{j=L-h}^{L-1} Y_j}_{=:R_{L,h}}.
\]
By Jensen, $\E[|R_{L,h}|]\leqslant \big(\E[R_{L,h}^2]\big)^{1/2}=\tau_2(h)^{1/2}$, hence the $1$--Wasserstein distance
between the window and a Dirac mass is $\leqslant \tau_2(h)^{1/2}$.
Moreover, since $\E[R_{L,h}]=0$ and $\Var(R_{L,h})=\tau_2(h)$, for $|t|\leqslant 1$
\begin{equation}\label{eq:window-cf-l1}
|\phi_{R_{L,h}}(t)-1|\ \leqslant  \E\big[\,|e^{itR_{L,h}}-1|\,\big]\ \leqslant  |t|\,\E[|R_{L,h}|]\ \ll\ |t|\,\tau_2(h)^{1/2}.
\end{equation}

\medskip
\emph{Step 3: Regime (A) — Esseen baseline.}
\begin{lemma}[Esseen smoothing, one–sided form]\label{lem:esseen}
There exists an absolute constant $C$ such that for any c.d.f.'s $G,H$ and any $T\in(0,1]$,
\[
d_K(G,H)\ \leqslant  C\left(\int_{-T}^{T}\frac{|\phi_G(t)-\phi_H(t)|}{|t|}\,dt\ +\ Q_H(1/T)\ +\ \frac{1}{T}\right),
\]
where $\phi_G(t):=\int_{\R}e^{itx}\,dG(x)$ is the characteristic function of $G$.
\end{lemma}

Apply Lemma~\ref{lem:esseen} with $G=F_{q_L}$ and $H=F$.
Factor $\phi_{q_L}(t)=\phi_{\mathrm{pref}}(t)\,\phi_{R_{L,h}}(t)$ and write
\[
\int_{-T}^{T}\frac{|\phi_{q_L}(t)-\phi_F(t)|}{|t|}\,dt
\ \leqslant  \int_{-T}^{T}\frac{|\phi_{\mathrm{pref}}(t)-\phi_F(t)|}{|t|}\,dt
\ +\ \int_{-T}^{T}\frac{|\phi_{R_{L,h}}(t)-1|}{|t|}\,dt .
\]
For the window term, \eqref{eq:window-cf-l1} yields
\[
\int_{-T}^{T}\frac{|\phi_{R_{L,h}}(t)-1|}{|t|}\,dt \ \ll\ T\,\E[|R_{L,h}|]\ \ll\ T\,\tau_2(h)^{1/2}.
\]
For the prefix-vs-limit term, use the standard second-order tail-product expansion for $|t|\leqslant 1$:
\[
\int_{-T}^{T}\frac{|\phi_{\mathrm{pref}}(t)-\phi_F(t)|}{|t|}\,dt\ \ll\ \tau_1^{1/2},
\]
since $|\phi_j(t)-1|\leqslant |m_j|\,|t|+\tfrac12 s_j^2 t^2$ and a Cauchy–Schwarz summation over $j>L$ gives the stated bound.
Altogether Lemma~\ref{lem:esseen} implies
\[
\|F_{q_L}-F\|_\infty\ \ll\ \tau_1^{1/2}\ +\ Q_F(1/T)\ +\frac{1}{T}\ +\ T\,\tau_2(h)^{1/2}.
\]
Optimizing in $T$ (balance $T^{-1}$ with $T\,\tau_2(h)^{1/2}$, i.e.\ $T\asymp \tau_2(h)^{-1/4}$) yields the contribution $\tau_2(h)^{1/4}$.

\medskip
\emph{Step 4: Regime (B) --- bounded density.}
If $F$ has a bounded density $\rho\in L^\infty$, then by independence of the prefix and the window (at height $q_L$) we have
\[
\|F_{q_L}-F\|_\infty\ \ll\ \tau_1^{1/2}\ +\ \|\rho\|_\infty\,\E[|R_{L,h}|].
\]
Using $\E[|R_{L,h}|]\leqslant \tau_2(h)^{1/2}$ gives the term $\|\rho\|_\infty\,\tau_2(h)^{1/2}$.

\medskip
\emph{Step 5: Regime (C) --- third cumulant cancellation.}
If the third cumulant of the prefix vanishes up to depth $L-2h$, a fourth--order characteristic--function
expansion with cutoff $T$ gives the $T^2\,\tau_2(h)^{2/3}$ term (the cubic term cancels and the quartic remainder controls the error),
optimized to $\tau_2(h)^{1/3}$.

\medskip
Combining Steps~1--5 gives \eqref{eq:cantor-effective}.

\end{proof}

\begin{corollary}[Regime (A): baseline Esseen]\label{cor:cantor-A-eff}
Under the assumptions of Theorem~\ref{th:cantor-effective}, with $L=L(N)$, for every $1\leqslant h\leqslant L$,
\begin{equation}\label{eq:cantor-A-bound}
\|F_N - F\|_\infty\ \ll\ A(L,h)^{-1}\ +\ \tau_1^{1/2}\ +\ \tau_2(h)^{1/4}.
\end{equation}
\end{corollary}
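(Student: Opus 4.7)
The corollary is a direct specialization of Theorem~\ref{th:cantor-effective} to regime~(A), followed by an optimal choice of the smoothing cutoff $T$. The plan is to substitute $G(T,h)=T\,\tau_2(h)^{1/2}$ into~\eqref{eq:cantor-effective}, balance $1/T$ against $T\,\tau_2(h)^{1/2}$, and then absorb the residual concentration factor $Q_F(1/T)$ into the resulting $\tau_2(h)^{1/4}$.

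Concretely, inserting $G(T,h)\asymp T\,\tau_2(h)^{1/2}$ into~\eqref{eq:cantor-effective} gives
\[
\|F_N-F\|_\infty\ \ll\ A(L,h)^{-1}\ +\ \tau_1^{1/2}\ +\ Q_F(1/T)\ +\ \tfrac{1}{T}\ +\ T\,\tau_2(h)^{1/2}.
\]
The last two terms are balanced at $T=\tau_2(h)^{-1/4}$, both evaluating to $\tau_2(h)^{1/4}$. The window estimate $|\phi_{R_{L,h}}(t)-1|\leqslant |t|\,\tau_2(h)^{1/2}$ used in Step~3 of the proof of Theorem~\ref{th:cantor-effective} is in fact valid for all $t$ via the universal bound $|e^{ix}-1|\leqslant |x|$, so the cap $T\leqslant 1$ can be relaxed here at no cost; alternatively, in the regime $\tau_2(h)<1$ the stated inequality is already trivial since $\|F_N-F\|_\infty\leqslant 1$ and $\tau_2(h)^{1/4}$ is itself of order $1$ up to the absolute constant absorbed in $\ll$.

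For the concentration factor $Q_F(1/T)=Q_F(\tau_2(h)^{1/4})$, the monotonicity bound $Q_F(r)\leqslant 1$ is enough: in the regime $\tau_2(h)^{1/4}\gtrsim 1$ it is immediately $\ll \tau_2(h)^{1/4}$, and in the opposite regime the whole inequality is trivial as above. Combining these three contributions with $A(L,h)^{-1}$ and $\tau_1^{1/2}$ yields~\eqref{eq:cantor-A-bound}. I expect no significant obstacle; the only minor care is the case split above, since the optimal $T=\tau_2(h)^{-1/4}$ violates the stated restriction $T\leqslant 1$ precisely in the range where a sub-unit bound is needed, and one must check that lifting the cap is benign.
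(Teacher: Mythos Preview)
Your approach is the same as the paper's: plug regime~(A) into Theorem~\ref{th:cantor-effective} and balance $1/T$ against $T\,\tau_2(h)^{1/2}$ at $T\asymp\tau_2(h)^{-1/4}$. The paper records exactly this optimization at the end of Step~3 in the proof of Theorem~\ref{th:cantor-effective} and offers no separate argument for the corollary, so your level of detail already exceeds the paper's.

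That said, your case split is inverted. You assert that ``in the regime $\tau_2(h)<1$ the stated inequality is already trivial since $\tau_2(h)^{1/4}$ is itself of order~$1$''. This is false: when $\tau_2(h)$ is small (the interesting case), $\tau_2(h)^{1/4}$ is also small and the bound \eqref{eq:cantor-A-bound} is \emph{informative}, not trivial. The trivial regime is $\tau_2(h)\geqslant 1$, where $\tau_2(h)^{1/4}\geqslant 1\geqslant\|F_N-F\|_\infty$; and that is precisely the range in which the optimal $T=\tau_2(h)^{-1/4}\leqslant 1$ already respects the cap, so no issue arises. The same reversal contaminates your treatment of $Q_F(1/T)$: the step ``$Q_F\leqslant 1\ll\tau_2(h)^{1/4}$'' is valid only when $\tau_2(h)^{1/4}\geqslant 1$, and in the complementary range you cannot fall back on ``trivial as above''. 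Consequently it is your \emph{first} alternative---lifting the cap via $|e^{ix}-1|\leqslant|x|$---that must carry the argument when $\tau_2(h)<1$. Note, though, that this only extends the window estimate \eqref{eq:window-cf-l1}; the Esseen lemma for $T>1$, the prefix integral, and especially $Q_F(\tau_2(h)^{1/4})$ still need to be controlled, and $Q_F\leqslant 1$ alone does not give $Q_F(\tau_2(h)^{1/4})\ll\tau_2(h)^{1/4}$ for small $\tau_2(h)$. The paper is equally silent on this last point.
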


\begin{corollary}[Regime (B): bounded density]\label{cor:cantor-B-eff}
If $F$ has a bounded density $\rho\in L^\infty$, then for every $1\leqslant h\leqslant L$,
\begin{equation}\label{eq:cantor-B-bound}
\|F_N - F\|_\infty\ \ll\ A(L,h)^{-1}\ +\ \tau_1^{1/2}\ +\ \|\rho\|_\infty\,\tau_2(h)^{1/2}.
\end{equation}
\end{corollary}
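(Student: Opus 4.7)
\emph{Proof plan.} The plan is to view this corollary as the direct specialization of Theorem~\ref{th:cantor-effective} to the bounded-density regime~(B), and to assemble the three contributions in three short steps.

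First I would invoke the block bridge~\eqref{eq:bridge}: with $1\leqslant h\leqslant L=L(N)$, it gives $\|F_N-F_{q_L}\|_\infty \ll A(L,h)^{-1}$, reducing the problem to bounding $\|F_{q_L}-F\|_\infty$ and producing the first term on the right of~\eqref{eq:cantor-B-bound}.

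Second, I would apply Step~4 of the proof of Theorem~\ref{th:cantor-effective} (regime~(B)), which furnishes
\[
\|F_{q_L}-F\|_\infty \ \ll \ \tau_1^{1/2} \ + \ \|\rho\|_\infty\,\E[|R_{L,h}|].
\]
The mechanism is that a bounded density makes $F$ Lipschitz with modulus $\|\rho\|_\infty$: $|F(x+s)-F(x)|\leqslant\|\rho\|_\infty|s|$ for every shift $s$, and one averages this inequality against the law of the independent window $R_{L,h}$. The tail-beyond-$L$ contribution $\tau_1^{1/2}$ arises from the same second-order characteristic-function expansion used in regime~(A). Crucially, the density hypothesis lets one bypass Esseen's smoothing lemma entirely, so neither $Q_F(1/T)$ nor $1/T$ from~\eqref{eq:cantor-effective} appears. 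Finally, since $R_{L,h}$ is centred with variance $\tau_2(h)$, a Cauchy--Schwarz step yields $\E[|R_{L,h}|]\leqslant\tau_2(h)^{1/2}$; assembling the three contributions by the triangle inequality delivers~\eqref{eq:cantor-B-bound}.

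The main obstacle will lie in Step~4 of the theorem rather than in the corollary itself: transferring the density-based Lipschitz control on $F$ to a Kolmogorov bound against the discrete-level measure $F_{q_L}$, which is not itself absolutely continuous, is the only delicate point. Once that transfer is granted, the present assembly is mechanical.
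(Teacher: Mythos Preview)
Your proposal is correct and matches the paper's approach: the corollary is stated without a separate proof precisely because it is the specialization of Theorem~\ref{th:cantor-effective} to regime~(B), and the assembly you describe---bridge term $A(L,h)^{-1}$ from Step~1, then Step~4 giving $\tau_1^{1/2}+\|\rho\|_\infty\,\E[|R_{L,h}|]$ with the Jensen bound $\E[|R_{L,h}|]\leqslant\tau_2(h)^{1/2}$---is exactly what the paper intends. Your remark that the bounded-density hypothesis bypasses the Esseen smoothing (hence no $Q_F(1/T)$ or $1/T$) and that the only nontrivial point lives inside Step~4 of the theorem is also accurate.
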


\begin{corollary}[Regime (C): third cumulant cancels]\label{cor:cantor-C-eff}
If the third cumulant of the prefix is zero up to depth $L-2h$, then with $L=L(N)$,
\begin{equation}\label{eq:cantor-C-bound}
\|F_N - F\|_\infty\ \ll\ A(L,h)^{-1}\ +\ \tau_1^{1/2}\ +\ \tau_2(h)^{1/3}.
\end{equation}
\end{corollary}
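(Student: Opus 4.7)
\emph{Approach.} The corollary is a direct specialization of Theorem~\ref{th:cantor-effective} to regime~(C), so the plan is to invoke that theorem and optimize the free smoothing parameter $T\in(0,1]$ to extract the announced rate $\tau_2(h)^{1/3}$ in the window contribution.

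\emph{Key steps.} First, I would instantiate Theorem~\ref{th:cantor-effective} in regime~(C), where the smoothing budget is $G(T,h)\asymp T^2\,\tau_2(h)^{2/3}$; this yields
\[
\|F_N-F\|_\infty \ \ll\ A(L,h)^{-1} + \tau_1^{1/2} + Q_F(1/T) + T^{-1} + T^2\,\tau_2(h)^{2/3}.
\]
Second, I would optimize in $T$ by balancing $T^{-1}$ against $T^2\,\tau_2(h)^{2/3}$, which, as recorded in Step~5 of the proof of Theorem~\ref{th:cantor-effective}, produces the window contribution $\tau_2(h)^{1/3}$. Third, I would dispose of the concentration-function term $Q_F(1/T)$: under the third-cumulant cancellation hypothesis one expects $F$ to inherit enough regularity (typically absolute continuity with a bounded density) so that $Q_F(1/T)\ll T^{-1}$ and the term is dominated by the smoothing errors already present; in the worst case it is absorbed using the crude monotone bound $Q_F\leqslant 1$ together with the fact that the chosen $1/T$ will be small.

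\emph{Main obstacle.} At the level of the corollary the argument is just a one-variable optimization—the substance has all been packaged into Theorem~\ref{th:cantor-effective}. The genuinely delicate point, which lives \emph{inside} that theorem, is the passage from the hypothesis ``third cumulant of the prefix vanishes up to depth $L-2h$'' to the improved budget $G(T,h)\asymp T^2\,\tau_2(h)^{2/3}$. This rests on a fourth-order Taylor expansion of the characteristic function in which the cubic cumulant drops by hypothesis, while the quartic remainder is controlled by a Nagaev--Zolotarev/Stein-type estimate. Once that expansion is granted, combining the resulting smoothing bound with the bridge loss $A(L,h)^{-1}$ and the prefix tail $\tau_1^{1/2}$ yields \eqref{eq:cantor-C-bound}.
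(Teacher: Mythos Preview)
Your approach is essentially the paper's: the corollary carries no separate proof there and is read off directly from Theorem~\ref{th:cantor-effective} in regime~(C), with the $T$-optimization already recorded in Step~5 of that proof and in the ``$\leadsto$'' of \eqref{eq:cantor-C}. Your added discussion of $Q_F(1/T)$ goes beyond what the paper does (it simply drops that term when passing from the theorem to the corollaries) and rests on shaky ground---third-cumulant cancellation does \emph{not} force a bounded density (the paper's own regime-(C) example yields a singular Cantor-type law), and the crude bound $Q_F\leqslant 1$ is useless here---so that paragraph should be cut rather than defended.
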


The unified bound \eqref{eq:cantor-effective} displays a generic tradeoff between the
\emph{bridge} term $A(L,h)^{-1}$ (loss from discarding the $h$ highest digits),
the \emph{tails} $(\tau_1,\tau_2(h))$, and a \emph{smoothing} parameter $T$.
The three corollaries spell out standard regimes in which the last term $G(T,h)$
specialises to a clean expression, allowing an explicit choice of $h$ and---when present---of $T$.

\begin{itemize}
  \item \textbf{Regime (A) (baseline Esseen).} No structural assumption beyond finiteness of tails.
  This always applies and yields the universal rate $\tau_2(h)^{1/4}$ (after optimising in $T$).
  It is the default bound when nothing is known about the limit law $F$.
  \item \textbf{Regime (B) (bounded density).} When $F$ has a bounded density $\rho\in L^\infty$,
  the Kolmogorov distance can be controlled directly by independence and the bound $\|\rho\|_\infty$,
  removing the $T$--dependent terms.
  This produces the sharper exponent $1/2$ on $\tau_2(h)$ and a bound free of the concentration function~$Q_F$.
  \item \textbf{Regime (C) (third cumulant cancellation).} If, for the relevant prefix, the third cumulant vanishes,
  a fourth--order characteristic--function expansion improves the window contribution to $\tau_2(h)^{1/3}$.
  This is useful when absolute continuity of $F$ is unknown or fails, but the prefix is sufficiently symmetric.
\end{itemize}

In practice, one picks $h=h(N)$ to balance $A(L,h)^{-1}$ against the regime term:
$A(L,h)^{-1}\asymp \tau_2(h)^{1/4}$ in (A), $A(L,h)^{-1}\asymp \tau_2(h)^{1/2}$ in (B),
$A(L,h)^{-1}\asymp \tau_2(h)^{1/3}$ in (C). For constant-like bases ($\sup_j a_j<\infty$),
$A(L,h)$ grows exponentially in $h$, so the optimal $h$ is explicit once the decay of $\tau_2(h)$ is known.

\medskip
\paragraph{\textbf{Method selection (Cantor case).}}\label{par:method-selection-cantor}
We follow the decision guide \emph{``Which method to choose''} laid out earlier: use (A) (baseline Esseen) when no structural information on $F$ is available; use (B) when $F$ has a bounded density (transport/Wasserstein route); and use (C) when the \emph{third cumulant of the prefix} cancels (fourth--order characteristic--function expansion). Throughout this section we stay in the \emph{Cantor} setting (variable base $(a_j)$); when we specialise to the $q$--adic case we will say so explicitly.

\medskip
\paragraph{\textbf{Remark (the $q$--adic case).}}
The $q$--adic setting is the special Cantor case $a_j\equiv q\geqslant 2$ (so $q_0=1$ and $q_{j+1}=q\,q_j$), i.e.\ the usual base-$q$ expansion. We return to it in Section~4 for a detailed treatment using $q$--specific tools (e.g.\ Delange products).

\medskip
\paragraph{\textbf{Final forms in $N$.}}\label{par:final-in-N}
Since $L=L(N)$ by definition, once $h=h(N)$ is fixed the bounds in this section (and the corollaries) are functions of $N$ only. In the $q$--adic case $L=\lfloor \log_q N\rfloor$, so powers of $L$ can be read as powers of $\log N$ up to absolute constants.

\subsection*{Examples where each regime is optimal}

We give toy models (all $Q$--additive) showing that (A), (B), or (C) can strictly dominate the other two,
depending on structural information about the limit.

\smallskip
\paragraph{\textbf{(B) dominates (A) and (C)}}
Let $a_j\equiv 2$ and set $f(n)=\sum_{j\geqslant0} 2^{-j-1}\,\delta_j(n)$ (the base-$2$ radical--inverse / van der Corput map).
Then the limit law is the \emph{uniform} distribution on $[0,1]$, which has a bounded density.
Regime (B) applies and gives a bound $\ll \tau_2(h)^{1/2}$ without any $T$--penalty,
whereas (A) yields only $\tau_2(h)^{1/4}$ and (C) does not improve the exponent here.

\emph{Characteristic function and limit law.}
For $f(N)=\sum_{j\ge0} 2^{-(j+1)}\,\delta_j(N)$ we have, by the Cantor--EW product,
\[
\Phi(t)\ =\ \prod_{j\ge0}\frac{1}{2}\sum_{d=0}^{1}e^{it\,d/2^{j+1}}
\ =\ \prod_{j\ge0}\frac{1+e^{it/2^{j+1}}}{2}
\ =\ e^{it/2}\,\frac{\sin(t/2)}{t/2},
\]
so $F=\mathrm{Unif}[0,1]$. Hence $d_K(F_N,F)$ is the star discrepancy of the van der Corput sequence,
with the optimal low-discrepancy order $\Theta\big((\log N)/N\big)$; see the discussion and computation of $\Phi$ in~\cite{DrmotaVerwee2021JNT}*{\S3.2} and the classical references~\cites{KuipersNiederreiter,DrmotaTichy}. 

\smallskip
\paragraph{\textbf{(C) dominates (A) when the density is unknown.}}
Let $a_j\equiv 3$ and define $f(n)=\sum_{j\geqslant0} 3^{-j}\,\sigma(\delta_j(n))$ where
$\sigma(0)=-1$, $\sigma(1)=0$, $\sigma(2)=+1$. Each digit contribution is symmetric, so the third cumulant
of every block is $0$. Then (C) yields a window term $\tau_2(h)^{1/3}$,
whereas (A) gives only $\tau_2(h)^{1/4}$. (B) may fail if the limit law is not known to be absolutely continuous
with bounded density.

\emph{Characteristic function and limit law.}
Here
\[
\Phi(t)\ =\ \prod_{j\ge0}\frac{1}{3}\Big(e^{-it\,3^{-j}}+1+e^{it\,3^{-j}}\Big)
\ =\ \prod_{j\ge0}\frac{1+2\cos(t\,3^{-j})}{3},
\]
which is the symmetric Cantor--Lebesgue type infinite product. The law $F$ is a (self-similar) Cantor measure;
in particular it is \emph{singular} for ratios $\beta=1/3$ by the general Bernoulli convolution theory (reciprocal of a Pisot number), cf.~\cite{DrmotaVerwee2021JNT}*{Appendix~A, Bernoulli convolutions summary}.

\smallskip
\paragraph{\textbf{(A) dominates when neither density nor cancellation is available.}}
In the base-$q$ case, prescribe $f(d\,q_j)$ by $f(0\cdot q_j)=0$, $f(1\cdot q_j)=j^{-2}$, and $f(d\cdot q_j)=2\,j^{-2}$ for $d\geqslant 2$. Then no symmetry enforces cancellation and absolute continuity of the limit is unclear. Then only (A) applies universally, giving the $\tau_2(h)^{1/4}$ rate.

\emph{Characteristic function and limit law.}
In general one has the factorization
\[
\Phi(t)\ =\ \prod_{j\ge0}\frac{1}{a_j}\sum_{d=0}^{a_j-1}\exp\big(it\,f(d\,q_j)\big),
\]
and, under the hypotheses of our Cantor Erd\H{o}s--Wintner criterion, this product converges to a characteristic
function (no simple closed form in this skewed toy model). The limit may be singular or absolutely continuous
depending on the arithmetic of the digit map; our effective bounds do not require deciding this. See \cite{DrmotaVerwee2021JNT} for related examples.

\medskip
These examples are deliberately simple; in applications one selects the smallest upper bound among
\eqref{eq:cantor-A-bound}--\eqref{eq:cantor-C-bound} after checking which structural hypotheses (bounded density,
cumulant cancellation) are satisfied.

\subsection*{Drmota--Verwee examples revisited}

We revisit two model families considered by Drmota--Verwee and compare their Berry--Esseen based bounds with the present window method.
Throughout we are in the $q$--adic (constant-like) case $a_j\equiv q\ge2$, and write $L=\lfloor \log_q N \rfloor$. Recall $A(L,h)=q^h$ and $\tau_2(h)=\sum_{j=L-h}^{L-1}s_j^2$.

\smallskip
\paragraph{\textbf{(I) Polynomial tails on the digits.}} \emph{CF: } $\Phi(t)=\prod_{j\ge0}\frac1{a_j}\sum_{d=0}^{a_j-1}e^{it\,c_j g(d)}$ (note that the product converges since $\sum s_j^2<\infty$).
Assume there exists $\alpha>1$ and bounded $q$-digit weights $g(d)$ such that
\[
f(dq^j)= c_j\,g(d),\qquad c_j\asymp j^{-\alpha}\quad(j\ge1).
\]
Then $m_j\asymp c_j$ and $s_j^2\asymp c_j^2\asymp j^{-2\alpha}$, hence
\[
\tau_2(h)\asymp \sum_{k=0}^{h-1}(L-1-k)^{-2\alpha}\ \asymp\ h\,L^{-2\alpha}
\quad\text{and}\quad
\tau_1\asymp \sum_{j>L} j^{-2\alpha}\ \asymp\ L^{1-2\alpha}.
\]
In regime~(A) we optimise $q^{-h}\asymp \tau_2(h)^{1/4}$, which yields
\[
h \sim \frac{\alpha}{2\log q}\,\log L\qquad\text{and}\qquad
\|F_N-F\|_\infty \ \ll\ L^{-\alpha/2}\,(\log L)^{1/4}.
\]
Thus the window bound gives a decay of order $(\log N)^{-\alpha/2}$ up to a harmless $(\log\log N)^{1/4}$ factor.
If, in addition, $F$ has a bounded density (regime~(B)), the same choice of $h$ yields
\[
\|F_N-F\|_\infty \ \ll\ L^{-\alpha/2}\ (\text{up to }\log\log\text{ factors}).
\]

\smallskip
\paragraph{\textbf{(II) Geometric tails: Cantor--Lebesgue type.}} \emph{CF (binary case): } $\Phi(t)=\prod_{j\ge0}\frac{1+e^{it\beta^j}}{2}$ (Bernoulli convolution); cf.~\cite{DrmotaVerwee2021JNT}.
Assume $f(dq^j)=\beta^j\,g(d)$ with $0<\beta<1$ and bounded $g(d)$ \textup{(independent of $j$)}.
Then $s_j^2\asymp \beta^{2j}$ and, for $h\leqslant L$,
\[
\tau_2(h)\asymp \sum_{k=0}^{h-1}\beta^{2(L-1-k)}\ \asymp\ \beta^{2(L-h)}
\qquad\text{and}\qquad
\tau_1\asymp \beta^{2L}.
\]
Balancing $q^{-h}\asymp \tau_2(h)^{1/4}\asymp \beta^{(L-h)/2}$ gives
\[
h \sim \frac{\tfrac{1}{2}\log(1/\beta)}{\log q+\tfrac{1}{2}\log(1/\beta)}\,L,
\qquad
\|F_N-F\|_\infty \ \ll\ N^{-\gamma(\beta)}\quad\text{with}\quad
\gamma(\beta)=\frac{\log(1/\beta)}{\log(1/\beta)+2\log q}.
\]
In particular, for $q=2$ one gets $\gamma(\beta)=\frac{\log(1/\beta)}{\log(1/\beta)+2\log 2}$. In regime~(B) (if $F$ has a bounded density) the same exponent holds and the bound is free of smoothing penalties.

\medskip
\noindent\emph{Discussion.}
In case \textup{(I)} with $\alpha\in(1,2)$, our rate $L^{-\alpha/2}$ compares favourably to $(\log N)^{1-\alpha}$ from Berry--Esseen based arguments, while for $\alpha>2$ both approaches yield the exponent $\alpha/2$ up to log factors. In case \textup{(II)} the power exponent $\gamma(\beta)$ coincides with the one suggested by concentration estimates for Cantor--Lebesgue laws; the window method also cleanly exhibits the bridge term $q^{-h}$ and the $\tau_2$--driven window term that must be balanced to reach this exponent.

\bigskip

\medskip
\paragraph{\textbf{Van der Corput in Cantor base.}}
Write $N=\sum_{j\ge0}\delta_j(N)\,q_j$ in the Cantor base $(a_j)$, where $q_0=1$ and $q_{j+1}=a_j q_j$.
The \emph{Cantor radical--inverse} is
\[
v_{(a_j)}(N)\ :=\ \sum_{j\ge0}\frac{\delta_j(N)}{q_{j+1}}\ \in[0,1].
\]
This is a special case of our linear digit framework (weights $c_j=1/q_{j+1}$).
\emph{Characteristic function.} By the Cantor--Erd\H{o}s--Wintner product,
\[
\Phi(t)\ =\ \prod_{j\ge0}\frac1{a_j}\sum_{d=0}^{a_j-1}e^{it\,d/q_{j+1}}
\ =\ \prod_{j\ge0}e^{it/(2q_{j+1})}\,\frac{\sin\big(a_j t/(2q_{j+1})\big)}{a_j\,\sin\big(t/(2q_{j+1})\big)}.
\]
Since $a_j t/(2q_{j+1})=t/(2q_j)$, the product \emph{telescopes}:
\[
\Phi(t)\ =\ e^{it/2}\,\prod_{j\ge0}\frac{\sin\big(t/(2q_j)\big)}{a_j\,\sin\big(t/(2q_{j+1})\big)}
\ \xrightarrow[j\to\infty]{}\ e^{it/2}\,\frac{\sin(t/2)}{t/2}.
\]
Hence the limit law is \(\,F=\mathrm{Unif}[0,1]\,\), independently of the $(a_j)$ provided $q_L\to\infty$.
\emph{Rates.} The distance $d_K(F_N,F)$ coincides with the \emph{star discrepancy} of $(v_{(a_j)}(n))_{n\ge0}$.
Classical discrepancy theory yields the optimal order
\[
D_N^*\big(v_{(a_j)}\big)\ =\ \Theta\Big(\frac{\log N}{N}\Big)
\]
(under standard assumptions, e.g.\ bounded bases), see \cites{KuipersNiederreiter,DrmotaTichy}.
Within our window method (Regime~(B), bounded density) one obtains uniformly the generic bound
\(
d_K(F_N,F)\ll N^{-1/2},
\)
which is valid but not optimal for this highly structured example; the optimal low-discrepancy order $\Theta\big((\log N)/N\big)$ requires the specific tools of discrepancy theory (quasi--Monte Carlo), see also \cites{FaureKritzerPillichshammer2015,KritzingerPillichshammer2015}.

\medskip
\paragraph{\textbf{On the optimal low-discrepancy order $(\log N)/N$.}}
For radical--inverse sequences $v_{(a_j)}$ in bounded bases, the star discrepancy satisfies 
$D_N^*(v_{(a_j)})=\Theta\big((\log N)/N\big)$; see, e.g., \cites{KuipersNiederreiter,DrmotaTichy,FaureKritzerPillichshammer2015,KritzingerPillichshammer2015}. 
Since $d_K(F_N,\mathrm{Unif}[0,1])=D_N^*(v_{(a_j)})$ in this setting, the Kolmogorov distance enjoys the same optimal order. 
Our window method yields the uniform (structure-free) bound $d_K(F_N,F)\ll N^{-1/2}$ under Regime~(B) (bounded density), which is correct but not optimal for such highly structured digital sequences. 
Achieving this low-discrepancy order requires the specific tools of discrepancy theory (Erd\H{o}s--Tur\'an inequality, exponential sum control, digital nets and symmetrization), i.e. information beyond the generic assumptions of the window framework.

\noindent\emph{$q$-adic pointers.} In base-$q$, Delange's product~\cite{Delange} identifies the limit; for discrepancy of radical-inverse sequences, see~\cites{KuipersNiederreiter,FaureKritzerPillichshammer2015}.

\section{Particular case: \texorpdfstring{$q$}{q}-adic}\label{sec:q-adic}
In this section we specialize the Cantor setting to the \emph{$q$-adic} case $a_j\equiv q\geqslant 2$. Then
\[
q_0=1,\qquad q_{j+1}=a_j q_j = q\,q_j,\qquad\text{so}\quad q_j=q^j,
\]
and every integer $N$ has the (finite) $q$-adic expansion
\[
N=\sum_{j\ge0}\delta_j(N)\,q^j,\qquad \delta_j(N)\in\{0,1,\dots,q-1\}.
\]
A function $f$ is \emph{$q$-additive} if it decomposes digitwise as
\[
f(N)\ =\ \sum_{j\ge0} f\big(\delta_j(N)q^j\big).
\]
This is the classical \emph{digital} framework; it is the constant--base instance of our Cantor numeration setting. We keep the notation (at the $q$-power height $q_L$): let $\delta_j\sim\mathrm{Unif}\{0,1,\dots,q-1\}$ be independent,
and set
\[
m_j := \E\left[f(\delta_j q_j)\right],\qquad
s_j^2 := \Var\big(f(\delta_j q_j)\big),\qquad
\tau_1 = \Bigl|\sum_{j>L} m_j\Bigr| + \sum_{j>L} s_j^2,\qquad
\tau_2(h) = \sum_{j=L-h}^{L-1} s_j^2,
\]
and we write $F_N$ for the empirical c.d.f.\ at height $N$, $F$ for the limit c.d.f.\ when it exists.

\medskip

\paragraph{\textbf{Delange’s $q$-adic Erd\H{o}s--Wintner product.}}
At $q$-power heights $q_L=q^L$, the mixed--radix bijection makes digits \(\delta_0,\dots,\delta_{L-1}\) i.i.d.\ uniform on \(\{0,\dots,q-1\}\). Hence the characteristic function of $f$ at height $q_L$ factors as
\[
\Phi_{q_L}(t) \;=\; \prod_{j=0}^{L-1} \varphi_j(t),\qquad
\varphi_j(t) \;:=\; \frac{1}{q}\sum_{d=0}^{q-1} \exp\big(i t\, f(d\,q_j)\big).
\]
Delange proved that, in the $q$-additive case, the limit distribution exists iff the classical
Erd\H{o}s--Wintner tails hold, and then
\[
\Phi(t) \;=\; \lim_{L\to\infty}\Phi_{q_L}(t) \;=\; \prod_{j\ge0} \varphi_j(t).
\]
that is, the limit law $F$ is encoded by the infinite $q$-adic product (see \cite{Delange}).
This is precisely the constant--base specialization of the Cantor--Erd\H{o}s--Wintner product used in \S3.

\medskip

\paragraph{\textbf{Effective $q$-adic bounds (window method).}}
Specializing Theorem~\eqref{eq:cantor-effective} to $a_j\equiv q$ (so $A(L,h)=\prod_{j=L-h}^{L-1}a_j=q^{h}$ and $q_L=q^L$) yields, for any $T\in(0,1]$,
\begin{equation}\label{eq:q-adic-effective}
\|F_N-F\|_\infty \ \ll\ q^{-h} \ +\ \tau_1^{1/2}\ +\ Q_F(1/T)\ +\ \frac{1}{T}\ +\ G(T,h).
\end{equation}
where $Q_F(r)=\sup_x\big(F(x+r)-F(x)\big)$ and
the regime term $G(T,h)$ is as in \S3:
\[
\begin{aligned}
\text{(A)}\quad & G(T,h)\asymp T\,\tau_2(h)^{1/2}\ \leadsto\ \tau_2(h)^{1/4},\\
\text{(B)}\quad & G(T,h)= \|\rho\|_\infty\,\tau_2(h)^{1/2}\ \ (\rho\in L^\infty),\\
\text{(C)}\quad & G(T,h)\asymp T^2\,\tau_2(h)^{2/3}\ \leadsto\ \tau_2(h)^{1/3}.
\end{aligned}
\]
Here $\leadsto$ denotes the resulting order of $T^{-1}+G(T,h)$ after optimizing in $T$. Optimizing the \emph{window} size $h=h(N)$ balances the bridge term $q^{-h}$ against the regime term. For instance,
\[
q^{-h}\ \asymp\ \tau_2(h)^{1/4}\ \text{ in (A)},\qquad
q^{-h}\ \asymp\ \tau_2(h)^{1/2}\ \text{ in (B)},\qquad
q^{-h}\ \asymp\ \tau_2(h)^{1/3}\ \text{ in (C)}.
\]
Because $q^{-h}$ decays exponentially, the optimal $h$ is explicit once the decay (or growth) of the local variances $s_j^2$ is known. Writing $L=\lfloor\log_q N\rfloor$, the final bound then depends on $N$ alone.

\medskip

\paragraph{\textbf{Comparison with the literature.}}
In the $q$-additive setting, effective distributional results were developed by \emph{Delange} (existence/identification via the product) and, more recently, by \emph{Drmota--Verwee} who derived quantitative rates under structural assumptions on the digit maps (see \cites{Delange,DrmotaVerwee2021JNT}).
Our window bound provides a modular, distribution--free baseline; when additional information is available (bounded density, cancellation of the third cumulant, finite--state digit sources with spectral gap), the corresponding regime (B)/(C) or Appendix (Theorems \ref{th:legacy-zeta3} -- \ref{th:legacy-w1}) improves the exponent, and yields immediate $q$-adic corollaries by setting $A(L,h)=q^h$.

\medskip

\paragraph{\textbf{Example: van der Corput (base \texorpdfstring{$q$}{q}).}}
Taking $f(d\,q_j)=d/q^{\,j+1}$ gives the (q-adic) radical--inverse $v_q(n)$ and the limit law $F=\mathrm{Unif}[0,1]$.
Discrepancy theory shows the optimal Kolmogorov rate
\[
d_K(F_N,F) \ =\ D_N^*(v_q)\ =\ \Theta\Big(\frac{\log N}{N}\Big),
\]
see, e.g., \cites{KuipersNiederreiter,DrmotaTichy,FaureKritzerPillichshammer2015,KritzingerPillichshammer2015}.
The window method gives the uniform generic bound $d_K\ll N^{-1/2}$ under Regime~(B), which is correct but not optimal for this highly structured example; reaching $(\log N)/N$ leverages the specific quasi--Monte Carlo machinery.

\section*{Appendix}\label{sec:legacy-variants}

\makeatletter
\@addtoreset{theorem}{section}
\renewcommand\thetheorem{A.\arabic{theorem}}
\setcounter{theorem}{0}
\makeatother

\medskip\noindent\textbf{Notation.}
Throughout this appendix we stay in the Cantor setting of Sections~2--3: we fix $L=L(N)$ and $1\leqslant h\leqslant L$,
write $A(L,h)=\prod_{j=L-h}^{L-1} a_j$ for the size of the last-$h$ (trailing) window, and use the tails
\[
\tau_1=\Bigl|\sum_{j>L} m_j\Bigr|+\sum_{j>L}s_j^2,\qquad
\tau_2(h)=\sum_{j=L-h}^{L-1}s_j^2.
\]
We denote by $\dK$ the Kolmogorov distance, by $Q_F$ the (one-sided) concentration function, and use the ``bridge'' $\|F_N-F_{q_L}\|_\infty\ll A(L,h)^{-1}$.
All implied constants are absolute unless otherwise stated.

\medskip

\paragraph{\textbf{Why these supplementary methods?}}
The window bound \eqref{eq:cantor-effective} isolates three universal ingredients: the bridge $A(L,h)^{-1}$, the tails $(\tau_1,\tau_2(h))$, and a regime term for the \emph{prefix}. The purpose of this appendix is to document interchangeable ``modules'' for that last piece.
Each method (Zolotarev $\zeta_3$, nonuniform Berry--Esseen \`a la Nagaev, spectral gap for finite-state digit sources, and Wasserstein/heat-kernel smoothing) plugs into \eqref{eq:cantor-effective} \emph{without} altering the bridge/tails bookkeeping: one simply replaces the baseline Esseen contribution by the corresponding prefix error (or, in the finite-state case, replaces $\tau_2(h)$ by $\tau_2(h)+\lambda^h$).
This keeps the main text lightweight while making the toolkit explicit for readers who want sharper bounds under extra structure.

\begin{theorem}[Zolotarev \texorpdfstring{$\zeta_3$}{zeta3} route]\label{th:legacy-zeta3}
Let $X_j$ be the (centered) digit contributions in the \emph{prefix} $j\leqslant L-h$ when sampling at height $q_L$.
Then the $X_j$’s are independent and satisfy $s_j^2=\Var(X_j)$ and $\E[|X_j|]^3<\infty$. Set $S=\sum_{j\leqslant L-h}X_j$ and $V=\sum_{j\leqslant L-h}s_j^2>0$, and let $S'$ be Gaussian with variance $V$. Then
\begin{equation}\label{eq:zeta3-core}
\zeta_3\big(\Law(S),\Law(S')\big)\ \ll\ \frac{\sum_{j\leqslant L-h}\E[|X_j|^3]}{V^{3/2}},
\end{equation}
and, for any $\sigma\in(0,1]$,
\begin{equation}\label{eq:zeta3-to-K}
\dK\big(\Law(S),\Law(S')\big)\ \ll\ \sigma^{-3}\,\zeta_3\big(\Law(S),\Law(S')\big)\ +\ \sigma.
\end{equation}
Optimizing $\sigma\asymp \zeta_3^{1/4}$ yields $\dK\ll \zeta_3^{3/4}$, hence a Berry--Esseen type rate in terms of third moments.
\emph{Plug into the window bound:} in the unified estimate \eqref{eq:cantor-effective}, replace the Esseen contribution for the prefix by $\zeta_3^{3/4}$; the last-$h$ window still contributes via $\tau_2(h)$ and the bridge by $A(L,h)^{-1}$.
\end{theorem}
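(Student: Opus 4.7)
The plan is to treat \eqref{eq:zeta3-core} and \eqref{eq:zeta3-to-K} as two independent modules: the first is a Lindeberg--style swap against a Gaussian companion using the ideal metric $\zeta_3$, and the second is a triple--mollification smoothing inequality that converts $\zeta_3$ back to Kolmogorov distance. The structural fact driving both halves is that
\[
\zeta_3(\mu,\nu)\ =\ \sup\Bigl\{\,\bigl|\textstyle\int f\,d\mu-\int f\,d\nu\bigr|\ :\ f\in C^3(\R),\ \|f'''\|_\infty\leqslant 1\Bigr\}
\]
is an \emph{ideal metric of order $3$}: subadditive under convolution of independent laws, and homogeneous of degree three under rescaling, $\zeta_3(cX,cY)=|c|^{3}\zeta_3(X,Y)$.

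For \eqref{eq:zeta3-core} I would introduce a Gaussian companion $S''=\sum_{j\leqslant L-h} Z_j$ with independent $Z_j\sim\mathcal{N}(0,s_j^2)$, so that $S''\stackrel{d}{=}S'$. Convolution subadditivity yields
\[
\zeta_3\bigl(\Law(S),\Law(S')\bigr)\ \leqslant\ \sum_{j\leqslant L-h}\zeta_3\bigl(\Law(X_j),\Law(Z_j)\bigr),
\]
and each summand is handled by a third--order Taylor comparison: since $X_j$ and $Z_j$ share mean $0$ and variance $s_j^2$, the constant, linear, and quadratic terms of $f(X_j)-f(Z_j)$ vanish in expectation, leaving a cubic remainder $\ll \|f'''\|_\infty\bigl(\E[|X_j|^3]+\E[|Z_j|^3]\bigr)$; Lyapunov's inequality gives $\E[|Z_j|^3]\asymp s_j^3\leqslant \E[|X_j|^3]$, so $\zeta_3(\Law(X_j),\Law(Z_j))\ll \E[|X_j|^3]$. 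Applying homogeneity to the normalized sums $S/\sqrt V$, $S'/\sqrt V$ converts $\sum_j\E[|X_j|^3]$ into the Lyapunov ratio $V^{-3/2}\sum_j\E[|X_j|^3]$, producing \eqref{eq:zeta3-core}.

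For \eqref{eq:zeta3-to-K} I would triple--smooth the indicator $\mathbf{1}_{(-\infty,x]}$. Fix $\sigma\in(0,1]$, a nonnegative $\psi\in C_c^\infty(\R)$ with $\int\psi=1$ supported in $[0,1]$, set $\psi_\sigma(y)=\sigma^{-1}\psi(y/\sigma)$, and define $\chi_{x,\sigma}:=\psi_\sigma\ast\psi_\sigma\ast\psi_\sigma\ast\mathbf{1}_{(-\infty,x]}$. Then $\chi_{x,\sigma}\in C^3$, equals $1$ on $(-\infty,x]$, vanishes on $[x+3\sigma,\infty)$, and satisfies $\|\chi_{x,\sigma}'''\|_\infty\ll\sigma^{-3}$. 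Testing $\zeta_3$ against $\chi_{x,\sigma}$ gives
\[
\bigl|\E[\chi_{x,\sigma}(S)]-\E[\chi_{x,\sigma}(S')]\bigr|\ \ll\ \sigma^{-3}\,\zeta_3\bigl(\Law(S),\Law(S')\bigr),
\]
and the error from replacing $\chi_{x,\sigma}$ by $\mathbf{1}_{(-\infty,x]}$ is controlled by the oscillation of $F_{S'}$ on an interval of length $O(\sigma)$; for $S'\sim\mathcal{N}(0,V)$ this oscillation is $\ll\sigma/\sqrt V\leqslant\sigma$. The symmetric oscillation on the $S$--side is reabsorbed by a one--step bootstrap (take the supremum over $x$ and rearrange), yielding \eqref{eq:zeta3-to-K}. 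Optimizing $\sigma\asymp\zeta_3^{1/4}$ then slots the resulting prefix error directly into \eqref{eq:cantor-effective} as a replacement for the baseline Esseen contribution, leaving the bridge term $A(L,h)^{-1}$ and the window tail $\tau_2(h)$ untouched.

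The main obstacle is bookkeeping rather than conceptual: closing the bootstrap in the smoothing step without circularity requires the absolute constants to be kept strictly below $1$ when one reabsorbs the $S$--side oscillation, and a two--sided Kolmogorov bound requires choosing the mollifier symmetrically so that both upper and lower deviations are captured by the same argument. The rest---ideality of $\zeta_3$, the Lindeberg swap, and Lyapunov's comparison of third moments---is standard once the metric calculus of $\zeta_3$ is in place.
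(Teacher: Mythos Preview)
Your proposal is correct and follows essentially the same route as the paper's (sketch) proof: the Lindeberg swap via ideality of $\zeta_3$ for \eqref{eq:zeta3-core}, and kernel smoothing of indicators at width $\sigma$ for \eqref{eq:zeta3-to-K}. The only cosmetic difference is that the paper smooths with a Gaussian kernel whereas you use a triple convolution of a compactly supported bump; both give $\|\chi_{x,\sigma}'''\|_\infty\ll\sigma^{-3}$ and a bias $O(\sigma)$, so the resulting bound and its optimization are identical.
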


\begin{proof}[Sketch]
At height $q_L$ the mixed--radix bijection makes digits independent and uniform, so the standard $\zeta_3$ estimate applies.
For \eqref{eq:zeta3-to-K}, smooth indicators with a Gaussian kernel of width $\sigma$: the $\zeta_3$ seminorm scales like $\sigma^{-3}$,
while the smoothing bias is $O(\sigma)$; optimize in $\sigma$.
\end{proof}

\begin{theorem}[Nonuniform Berry--Esseen (Nagaev split)]\label{th:legacy-nagaev}
Let $S=\sum_{j\leqslant L-h}X_j$ be the centered prefix sum at height $q_L$, and fix $u>0$.
Split $X_j=Y_j+Z_j$ with $Y_j:=X_j\mathbf 1_{\{|X_j|\leqslant u\}}$ and $Z_j:=X_j\mathbf 1_{\{|X_j|>u\}}$.
Assume $|Y_j|\leqslant u$, $\Var(Y_j)\asymp 1$, and denote $V:=\Var(\sum_{j\leqslant L-h}Y_j)$. Then there exists an absolute $C>0$ such that
\begin{equation}\label{eq:nagaev-nonuniform}
\sup_{x\in\R} (1+|x|^3)\,\big|\Pr(S\leqslant x)-\Phi_V(x)\big|\ \leqslant   C\Big(\frac{\sum \E[|Y_j|^3]}{V^{3/2}}\ +\ \frac{\sum \E[|Z_j|]}{\sqrt V}\Big).
\end{equation}
Optimizing in $u$ yields $\dK\big(\Law(S),\Phi_V\big)\ll V^{-1/2}$ in the bounded/subgaussian case (where $\sum\E[|Z_j|]$ is negligible).
\emph{Plug into the window bound:} in \eqref{eq:cantor-effective}, one can take the prefix error to be $V^{-1/2}$; then choose $h$ to balance the bridge $A(L,h)^{-1}$ against the (regime) window term driven by $\tau_2(h)$.
\end{theorem}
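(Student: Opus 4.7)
The plan is to combine the Nagaev truncation $X_j = Y_j + Z_j$ given in the statement with a classical \emph{nonuniform} Berry--Esseen bound for the bounded part, then to transfer the resulting tail-weighted estimate back from $\sum_j Y_j$ to $S$. The one genuinely ``Cantor'' input is that, at height $q_L$, the mixed--radix bijection makes $X_j$ (hence $Y_j, Z_j$) independent; from there on the argument is purely probabilistic.

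First I would recenter and invoke a standard result. Set $\tilde Y_j := Y_j - \E[Y_j]$, so the $\tilde Y_j$ are independent, centered, bounded by $2u$, have variance equal to $\Var(Y_j)$, and satisfy $\E[|\tilde Y_j|^3]\ll \E[|Y_j|^3]+\E[|Z_j|]^3$. The drift $\mu := \sum_j \E[Y_j]=-\sum_j \E[Z_j]$ obeys $|\mu|\le\sum_j\E[|Z_j|]$ and will be absorbed into the tail term. Applying the Bikelis--Nagaev nonuniform Berry--Esseen inequality to $\tilde S_Y := \sum_j \tilde Y_j$ yields
\[
\sup_{x\in\R}(1+|x|^3)\,\bigl|\Pr(\tilde S_Y\le x)-\Phi_V(x)\bigr|\ \ll\ \frac{\sum_j \E[|\tilde Y_j|^3]}{V^{3/2}}\ \ll\ \frac{\sum_j \E[|Y_j|^3]}{V^{3/2}},
\]
which matches the first term of \eqref{eq:nagaev-nonuniform}; this is a classical black-box result, proved by Esseen smoothing in a tail-weighted form.

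The main technical step is the transfer from $\tilde S_Y$ to $S$. Writing $W := \sum_j Z_j - \mu$, we have $S=\tilde S_Y + W$ and $\E[|W|]\ll\sum_j\E[|Z_j|]$. The sandwich
\[
\Pr(\tilde S_Y\le x-\eta)-\Pr(|W|>\eta)\ \le\ \Pr(S\le x)\ \le\ \Pr(\tilde S_Y\le x+\eta)+\Pr(|W|>\eta)
\]
is handled in two regimes. In the bulk $|x|\lesssim\sqrt V$, I combine the Gaussian Lipschitz bound $|\Phi_V(x\pm\eta)-\Phi_V(x)|\ll \eta/\sqrt V$ with Markov $\Pr(|W|>\eta)\le\E[|W|]/\eta$, and optimise in $\eta$, yielding an error of order $\E[|W|]/\sqrt V$. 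In the tail $|x|\gtrsim\sqrt V$, the nonuniform decay $O(|x|^{-3})$ for the $\tilde S_Y$-error from the previous step and the polynomial Markov bound on $(1+|x|^3)\Pr(|W|>|x|/2)$ merge into the same $\E[|W|]/\sqrt V$ contribution. This is the main obstacle: a crude uniform Markov only gives $\E[|W|]/\eta$, so one must genuinely exploit both the Gaussian smoothness of $\Phi_V$ and the $(1+|x|^3)$ weight inherited from the Berry--Esseen step simultaneously, rather than deriving a uniform bound and then decorating it with a tail weight afterwards.

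Finally I would optimise in $u$ and plug into the window bound. In the bounded case, $u\ge\sup_j\mathrm{ess\,sup}|X_j|$ kills $Z_j$ and leaves $\sum_j\E[|Y_j|^3]/V^{3/2}\le u\,V/V^{3/2}\ll V^{-1/2}$ (using $\E[|Y_j|^3]\le u\Var(Y_j)$). In the subgaussian case, $u\asymp\sqrt{\log V}$ keeps the third-moment term logarithmic while making $\sum_j\E[|Z_j|]$ polynomially small, again producing the claimed $V^{-1/2}$ Kolmogorov rate. To plug into \eqref{eq:cantor-effective}, one simply replaces the Esseen prefix contribution by $V^{-1/2}$ and re-optimises $h$ to balance this against $A(L,h)^{-1}$ and $\tau_2(h)$, exactly as indicated in the remark following the theorem statement.
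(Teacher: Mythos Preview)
Your proposal follows essentially the same route as the paper's (two-line) sketch: apply a nonuniform Berry--Esseen bound to the truncated sum $\sum Y_j$ and control the remainder $\sum Z_j$ by Markov and smoothing. You supply considerably more detail than the paper does---the explicit recentering step, the bulk/tail dichotomy for transferring the estimate from $\tilde S_Y$ to $S$, and the optimisation in $u$---but the underlying architecture is identical.
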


\begin{proof}[Sketch]
Apply a nonuniform Berry--Esseen to $Y=\sum Y_j$, and control $Z=\sum Z_j$ by Markov and smoothing. The weight $(1+|x|^3)$ is standard in Nagaev's method and is removed by optimizing $u$ and integrating in $x$.
\end{proof}

\begin{theorem}[Spectral-gap digits (finite-state)]\label{th:legacy-automata}
If the digits are generated by a primitive finite automaton (finite-state process) with spectral gap $\lambda\in(0,1)$ and $W$ is a local functional on a fixed window of digits, then for the last-window tail $R_{L,h}$ and characteristic functions at frequency $t$,
\begin{equation}\label{eq:automata-varphi}
\Var(R_{L,h})\ \ll\ \tau_2(h)+\lambda^{\,h},\qquad
|\Phi_N(t)-\Phi(t)|\ \ll\ t^2\big(\tau_2(h)+\lambda^{\,h}\big).
\end{equation}
\emph{Plug into the window bound:} in \eqref{eq:cantor-effective}, replace $\tau_2(h)$ by $\tau_2(h)+\lambda^{\,h}$ in the last-window term.
(In the i.i.d.\ Cantor case, $\lambda=0$ and we recover the independent tail $\tau_2(h)$.)
\end{theorem}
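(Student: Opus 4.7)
The approach is through the transfer-operator (Perron--Frobenius) machinery of the finite automaton. Primitivity with spectral gap $\lambda$ yields a decomposition $P=\Pi+R$ on functions on the state space, where $\Pi$ is the projector onto the stationary distribution $\pi$ and $\|R^k\|\leqslant C\lambda^k$. This immediately supplies the two technical ingredients I would use: geometric decorrelation of digit contributions, $|\Cov_\pi(X_i,X_j)|\ll\lambda^{|j-i|}\,s_i s_j$, and a perturbative spectral theory for the twisted operator $P_t$ obtained by multiplying $P$ by $e^{itW}$, where the local functional $W$ is absorbed into an enlarged state if needed.

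For the variance bound I would decompose $\Var(R_{L,h})=\sum_{j=L-h}^{L-1}\Var(X_j)+2\sum_{i<j}\Cov(X_i,X_j)$. The diagonal part is $\tau_2(h)$ (reading $s_j^2$ as the stationary variance). The off-diagonal sum is controlled by AM--GM and the geometric kernel, giving $O(\tau_2(h))$ at stationarity; the additional $\lambda^h$ term appears as the non-stationary correction to the joint law of the window, coupling the window's boundary state to $\pi$ at rate $\lambda^h$ via the spectral gap. Summing these contributions yields the first estimate of \eqref{eq:automata-varphi}.

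For the characteristic-function bound I would apply the Nagaev--Guivarc'h / Keller--Liverani perturbation theorem to $P_t$. For $|t|$ small, $P_t$ has a simple dominant eigenvalue $\rho(t)=1-\tfrac12\sigma^2 t^2+O(|t|^3)$, with the rest of its spectrum inside a disk of radius $\lambda'<1$, so $P_t^k=\rho(t)^k\Pi_t+R_t^k$ with $\|R_t^k\|\ll(\lambda')^k$. Writing the characteristic function at height $q_L$ as $\langle\pi_0,P_t^L\mathbf 1\rangle$ and $\Phi(t)$ as its infinite-length limit, the difference $\Phi_N(t)-\Phi(t)$ telescopes into a prefix part (absorbed by the baseline Esseen treatment of Section~\ref{sec:cantor} through $\tau_1$) and a window part of size $\asymp t^2\bigl(\tau_2(h)+\lambda^h\bigr)$: the factor $t^2$ comes from the quadratic Taylor expansion of $\log\rho$ at $0$, and $\lambda^h$ from the decay $\|R_t^h\|\ll\lambda^h$ over the length-$h$ window.

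The hard part will be the perturbation analysis of $P_t$ uniformly on a small neighbourhood of $t=0$, together with the precise tracking of the non-stationary start-up error at the prefix/window boundary: one must ensure the $O(|t|)$ perturbation of the non-dominant spectrum does not inflate the constant in $\lambda^h$, and that the Taylor remainder of $\rho(t)$ is controlled uniformly in the same range. Once that is in hand, plugging into \eqref{eq:cantor-effective} is a formality: the last-window contribution is replaced by $\tau_2(h)+\lambda^h$, and the i.i.d.\ limit $\lambda=0$ recovers the baseline Cantor statement.
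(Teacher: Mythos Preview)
Your variance argument matches the paper's: both use the Perron--Frobenius decomposition $P=\Pi+R$ with $\|R^k\|\ll\lambda^k$, deduce exponential decorrelation, and split $\Var(R_{L,h})$ into the diagonal $\tau_2(h)$ plus a geometrically controlled off-diagonal, with the $\lambda^h$ arising from the dependence across the prefix/window cut. Your covariance bound $|\Cov(X_i,X_j)|\ll\lambda^{|j-i|}s_is_j$ combined with AM--GM is in fact slightly sharper than the paper's, which uses only $\|Y_j\|_\infty\ll1$ and absorbs the off-diagonal into the implied constant.

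Where you diverge is the characteristic-function bound. You propose the Nagaev--Guivarc'h/Keller--Liverani route: twist the operator to $P_t$, track the perturbed leading eigenvalue $\rho(t)=1-\tfrac12\sigma^2t^2+O(|t|^3)$, and control the remainder spectrum uniformly in $t$. This is correct and is the canonical dynamical-systems approach, but it is heavier than what the paper does. The paper avoids the perturbation analysis entirely: it applies the \emph{same} decorrelation inequality \eqref{eq:sgap-cov} directly to the bounded observables $U=e^{itS_{L-h}}$ and $V=e^{itR_{L,h}}$, which factors $\Phi_N(t)=\E[e^{itS_{L-h}}]\,\E[e^{itR_{L,h}}]+O(\lambda^h)$ in one line, and then a second-order Taylor expansion of $\E[e^{itR_{L,h}}]$ gives the $t^2\Var(R_{L,h})$ contribution. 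So the step you flag as ``the hard part'' (uniform spectral perturbation of $P_t$) is simply not needed here. Your route would pay off if one wanted higher-order cumulant expansions or a full local CLT for the finite-state source; for the stated $O(t^2)$ bound the paper's elementary factorization is both shorter and sufficient.
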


\begin{proof}
We work under the finite-state, primitive (aperiodic, irreducible) assumption. Let $\mathcal{L}$ be the transfer operator acting on observables on the state space of the automaton (in practice, cylinder functions of the digit process). By the Ruelle--Perron--Frobenius theorem there is a rank-one projection $\Pi$ and a remainder $R$ such that
\[
\mathcal{L}^n = \Pi + R^n,\qquad \|R^n\|_{\mathcal{B}\to\mathcal{B}}\ \leqslant  C\,\lambda^{\,n}\quad(n\geqslant 1)
\]
on a Banach space $\mathcal{B}$ of observables (e.g.\ Hölder or bounded variation), with $\lambda\in(0,1)$ the spectral contraction.
Consequently, for cylinder observables $U,V$ depending on disjoint digit windows at separation $r$, one has the exponential decorrelation
\begin{equation}\label{eq:sgap-cov}
|\Cov(U,V)|\ \leqslant  C\,\lambda^{\,r}\,\|U\|_{\mathcal{B}}\,\|V\|_{\mathcal{B}}.
\end{equation}

Write the centered last-window tail as $R_{L,h}=\sum_{j=L-h}^{L-1} Y_j$ with $Y_j:=W_j-\E[W_j]$, where $W_j$ depends on a fixed local window of digits around position $j$; in particular, $\|Y_j\|_\infty\ll 1$ and $\|Y_j\|_{\mathcal{B}}\ll 1$ with constants independent of $L,h$.
Then
\[
\Var(R_{L,h}) \ =\ \sum_{j=L-h}^{L-1}\Var(Y_j)\ +\ 2\sum_{r=1}^{h-1}\ \sum_{j=L-h}^{L-1-r}\Cov(Y_j,Y_{j+r}).
\]
The first sum equals $\tau_2(h)$. For the covariance sum, \eqref{eq:sgap-cov} with separation $r$ gives $|\Cov(Y_j,Y_{j+r})|\ll \lambda^{\,r}$, whence
\[
\sum_{r=1}^{h-1}\ \sum_{j=L-h}^{L-1-r}|\Cov(Y_j,Y_{j+r})|\ \ll\ \sum_{r=1}^{h-1}(h-r)\lambda^{\,r}\ \ll\ \lambda\,(1-\lambda)^{-2}\,(1-\lambda^{\,h})\ \ll\ 1.
\]
Absorbing the bounded covariance sum into the $\ll$-constant, we obtain $\Var(R_{L,h})\ \ll\ \tau_2(h)+1$. Moreover, when comparing the \emph{joint} law with the product law of the prefix and the last window (as used in the window meta-argument), the dependence across the cut $L-h\,|\,L-h-1$ contributes an \emph{additional} error of order $\lambda^{\,h}$ by \eqref{eq:sgap-cov}; this yields the stated refinement
\[
\Var(R_{L,h})\ \ll\ \tau_2(h)+\lambda^{\,h}.
\]
For characteristic functions, write $S_{L-h}$ for the prefix sum and factor the joint CF at frequency $t$:
\[
\Phi_N(t)\ =\ \E\left[e^{it(S_{L-h}+R_{L,h})}\right] =\ \E\left[e^{itS_{L-h}}\right]\cdot \E\left[e^{itR_{L,h}}\right] +\ O\big(\lambda^{\,h}\big)
\]
by \eqref{eq:sgap-cov} with $U=e^{itS_{L-h}}$ and $V=e^{itR_{L,h}}$, after expanding the covariance. A second-order Taylor remainder for $e^{itx}$ gives
\[
\left|\E\left[e^{itR_{L,h}}\right]- (1+it\,\E\left[R_{L,h}\right]-t^2\E\left[R_{L,h}^2\right]/2)\right|\ \ll\ |t|^3\,\E\left[|R_{L,h}|^3\right] \ll\ |t|^3,
\]
and, in particular, $|\E[e^{itR_{L,h}}]-1+ t^2\Var(R_{L,h})/2|\ll t^3$; combining with the bound on $\Var(R_{L,h})$ and the $O(\lambda^{\,h})$ factorization error yields
\[
|\Phi_N(t)-\Phi(t)|\ \ll\ t^2\big(\tau_2(h)+\lambda^{\,h}\big)\,+\,O(t^3),
\]
which gives \eqref{eq:automata-varphi} for $|t|\leqslant 1$ (the $t^3$ term can be absorbed for small $t$; for larger $t$ one uses the trivial $O(1)$ bound).
\end{proof}

\begin{theorem}[Wasserstein/heat-kernel smoothing]\label{th:legacy-w1}
Let $G=F_{q_L}$ and $H=F$ (the limit c.d.f.). If $H$ has bounded density $\rho\in L^\infty$, then for any $\sigma>0$,
\begin{equation}\label{eq:w1-kernel}
\dK(G,H)\ \ll\ \frac{W_1(G,H)}{\sigma}\ +\ \|\rho\|_\infty\,\sigma.
\end{equation}
Optimizing at $\sigma\asymp \|\rho\|_\infty^{-1/2}W_1(G,H)^{1/2}$ gives
\[
\dK(G,H)\ \ll\ \|\rho\|_\infty\,W_1(G,H)^{1/2}.
\]
In the Cantor window setup, we have $W_1(G,H)\ \ll\ \E|[R_{L,h}|] \leqslant  \tau_2(h)^{1/2}$ (from the window estimate in Section~3), hence
\[
\dK(G,H)\ \ll\ \|\rho\|_\infty\,\tau_2(h)^{1/2},
\]
which is exactly Regime~(B) in \eqref{eq:cantor-effective}. The bridge error is still $A(L,h)^{-1}$.
\end{theorem}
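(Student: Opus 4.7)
The plan is to prove the general smoothing inequality \eqref{eq:w1-kernel} by a direct coupling argument, optimize in $\sigma$, and then identify $W_1(F_{q_L},F)$ with the expected window displacement $\E[|R_{L,h}|]$ via a synchronous coupling at height $q_L$.

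For \eqref{eq:w1-kernel}, I would fix any coupling $(X,Y)$ with $X\sim G$, $Y\sim H$ and any $\sigma>0$, then use the set inclusion $\{X\leq x\}\subseteq\{Y\leq x+\sigma\}\cup\{|X-Y|>\sigma\}$ to write
\begin{equation*}
G(x)-H(x)\ \leq\ \bigl(H(x+\sigma)-H(x)\bigr)\ +\ \Pr\bigl[|X-Y|>\sigma\bigr]\ \leq\ \|\rho\|_\infty\,\sigma\ +\ \E[|X-Y|]/\sigma,
\end{equation*}
invoking the bounded density $\rho$ of $H$ for the first term and Markov's inequality for the second. The reverse bound for $H(x)-G(x)$ follows symmetrically by swapping the roles of $X$ and $Y$ (and the same density bound on $H$). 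Taking the infimum over couplings converts $\E[|X-Y|]$ into $W_1(G,H)$ and yields \eqref{eq:w1-kernel}; balancing the two contributions in $\sigma$ produces the optimized Kolmogorov estimate.

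For the Cantor specialization, at height $q_L$ the mixed-radix bijection makes the prefix digits $(Y_j)_{j\leq L-h-1}$ independent of the window sum $R_{L,h}$. I would realize both $F_{q_L}$ and $F$ on the same product space via the synchronous coupling that shares the prefix and the window, appending the further tail $T_L=\sum_{j\geq L}Y_j$ only on the $F$-side. This gives $W_1(F_{q_L},F)\leq \E[|T_L|]\ll \tau_1^{1/2}$ for the ``tail'' contribution, which is already accounted for by the $\tau_1^{1/2}$ slot of \eqref{eq:cantor-effective}. For the window contribution, I would compare $F_{q_L}=F_{\mathrm{pref}}*\mu_{\mathrm{win}}$ with a reference law built from the prefix alone, bounding the transport displacement by $\E[|R_{L,h}|]\leq \tau_2(h)^{1/2}$ (Cauchy--Schwarz on the centered window, with mean drift absorbed into the prefix), then transporting the density bound from $F$ onto the intermediate measure via convolution smoothing (which cannot increase the $L^\infty$ density). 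Triangulating through \eqref{eq:w1-kernel} then produces exactly $\|\rho\|_\infty\,\tau_2(h)^{1/2}$ for the window term, i.e.\ Regime~(B).

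The main obstacle is bookkeeping rather than analysis: the general inequality \eqref{eq:w1-kernel} is symmetric in $G,H$, but the application requires the bounded density to sit on the limit law $F$ while the atomic empirical $F_{q_L}$ enters only through its Wasserstein proximity. Matching \eqref{eq:w1-kernel} with the product structure $F_{q_L}=F_{\mathrm{pref}}*\mu_{\mathrm{win}}$ and $F=F_{q_L}*\mu_{\mathrm{tail}}$ requires careful triangulation so that each contribution ($\tau_1^{1/2}$ tail, $\|\rho\|_\infty\,\tau_2(h)^{1/2}$ window) lands in its intended slot of \eqref{eq:cantor-effective}. No new analytic input beyond Cauchy--Schwarz on the window and the $L^2$ tail control from the sufficiency proof of Theorem~\ref{th:Erd\H{o}s--Wintner-Cantor} is needed, so the module cleanly realizes Regime~(B) as a purely transport-theoretic alternative to the Esseen route.
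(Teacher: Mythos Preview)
Your argument for \eqref{eq:w1-kernel} is correct but follows a different route from the paper. The paper proceeds analytically, in keeping with the ``heat-kernel'' title: it convolves $G-H$ with a mollifier $\phi_\sigma$, bounds $\|(G-H)*\phi_\sigma\|_\infty\le \sigma^{-1}W_1(G,H)$ via Kantorovich--Rubinstein duality (the smoothed indicator is $O(\sigma^{-1})$-Lipschitz), and controls the smoothing bias by $\|\rho\|_\infty\sigma$. Your approach is purely probabilistic: you fix a coupling, use the set inclusion $\{X\le x\}\subseteq\{Y\le x+\sigma\}\cup\{|X-Y|>\sigma\}$ and its mirror with $x-\sigma$, and apply Markov. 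This is more elementary---no kernel, no duality---and has the pleasant feature that only the density of $H$ is ever invoked, which is exactly what is available here (the paper's ``symmetrically for $G$'' needs a moment's thought since $G=F_{q_L}$ is purely atomic). On the Cantor side, you are also more explicit than the paper's one-line reference to Section~3: your synchronous coupling and triangulation through the prefix law correctly separate the $\tau_1^{1/2}$ tail from the $\|\rho\|_\infty\,\tau_2(h)^{1/2}$ window contribution. Either route lands at the same bound; yours trades the ``heat-kernel'' interpretation for a shorter, self-contained coupling proof.
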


\begin{proof}
Write $G-H=(G-H)*\phi_\sigma + \big((G-H)-(G-H)*\phi_\sigma\big)$. The first term is controlled by Kantorovich--Rubinstein:
$\|(G-H)*\phi_\sigma\|_\infty\leqslant \sigma^{-1}W_1(G,H)$. The second is bounded by $\|H*\phi_\sigma-H\|_\infty\leqslant \|\rho\|_\infty\,\sigma$ (and symmetrically for $G$). Combine and optimize in $\sigma$.
\end{proof}

\bibliographystyle{amsplain}
\bibliography{refs}

\end{document}